\newtheorem{theorem}{Theorem}[section]
\newtheorem{lem}[theorem]{Lemma}
\newtheorem{prop}[theorem]{Proposition}
\newtheorem{thm}[theorem]{Theorem}
\newtheorem{cor}[theorem]{Corollary}
\theoremstyle{definition}
\newtheorem{deff}[theorem]{Definition}
\theoremstyle{question}
\newtheorem{ques}[theorem]{Question}
\newtheorem{conj}[theorem]{Conjecture}
\theoremstyle{remark}
\newtheorem{example}[theorem]{Example}
\newtheorem{remark}[theorem]{Remark}
\numberwithin{equation}{section}
\newcommand{\G}{\Gamma}
\begin{document}

\tikzset{->-/.style={decoration={
  markings,
  mark=at position .65 with {\arrow{>}}},postaction={decorate}}}

\title[Decomposing groups by codimension-1 subgroups]{Decomposing groups by codimension-1 subgroups}



\author{Nansen Petrosyan}
\address{School of Mathematics, University of Southampton, Southampton SO17 1BJ, UK}
\email{n.petrosyan@soton.ac.uk}

\subjclass[2010]{20F67, 20E08}

\date{\today}

\keywords{CAT(0) cube complex, codimension-1 subgroup, finite-height subgroup, relatively quasiconvex subgroup of a relatively hyperbolic group}

\begin{abstract}  The paper is concerned with Kropholler's conjecture on splitting a finitely generated group over a codimension-$1$ subgroup.  For a subgroup $H$ of a group $G$, we introduce the notion of `finite splitting height'  which  generalises the finite-height property. By considering  the dual CAT(0) cube complex associated to a codimension-$1$ subgroup $H$ in $G$, we show that the Kropholler-Roller conjecture holds when $H$ has finite splitting height in $G$.     Examples of subgroups of finite height are stable subgroups or more generally strongly quasiconvex subgroups.  Examples of subgroups of finite splitting height include relatively quasiconvex subgroups of relatively hyperbolic groups with virtually polycyclic peripheral subgroups. In particular, our results extend Stallings' theorem and generalise a theorem of Sageev on decomposing a hyperbolic group  by quasiconvex subgroups.\end{abstract}

\maketitle

\setlist[enumerate,1]{before=\itshape,font=\normalfont, leftmargin=1.1cm}

\section{Introduction}\label{sec:intro}

The number of ends $e(G)$ is an important invariant of a finitely generated group $G$. By fundamental work of Stallings \cite{Stall1, Stall2}, a finitely generated group $G$ splits as a nontrivial free product with amalgamation or as an HNN-extension over a finite subgroup if and only if $e(G)>1$.  Houghton in \cite{Hough} generalised the number of ends of a group by defining $e(G,H)$ in particular for the pair of  a finitely generated group $G$ and its subgroup $H$. There are number of important generalisations of Stallings' theorem when $e(G, H)>1$ \cite{Bowd}, \cite{DunSag}, \cite{DunSw},  \cite{Krop},  \cite{Scott}, \cite{ScSw}, \cite{Swar}.

One can formulate Stallings' theorem, by stating that $G$ splits nontrivially over a finite group if and only if $G$ has a proper almost-invariant subset. In 1988, Kropholler proposed the following generalisation of Stallings' theorem \cite{Dun:fin}, \cite{NibSag}. 

\begin{conj}[Kropholler]\label{conj:K} Let $G$ be a finitely generated group and let $H$ be a subgroup. If $G$ contains an $H$-proper $H$-almost invariant subset $A$ such that $AH=A$, then $G$ admits a nontrivial splitting over a subgroup commensurable with a subgroup of $H$.
\end{conj}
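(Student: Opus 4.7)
The plan is to work entirely with Sageev's dual CAT(0) cube complex construction. From the $H$-proper, $H$-almost invariant set $A$ with $AH=A$, form the pocset $\mathcal{A}=\{Ag : g\in G\}\cup\{A^{c}g : g\in G\}$ modulo the equivalence $B\sim B'$ whenever $B\triangle B'$ is contained in finitely many right cosets of $H$. Sageev orientations of $\mathcal{A}/\!\sim$ satisfying the descending chain condition and the consistency condition form the $0$-skeleton of a CAT(0) cube complex $X=X(A)$ on which $G$ acts; a standard computation, using $AH=A$ and $H$-almost invariance, shows that the stabilizer of every hyperplane of $X$ is commensurable with a subgroup of $H$. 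Thus any nontrivial $G$-tree that can be extracted from $X(A)$ will yield, by Bass–Serre theory, a splitting of $G$ of the required type.

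The next step is to produce such a tree. Since $A$ is $H$-proper, the two principal orientations associated to $A$ and to $A^{c}$ give distinct $0$-cubes, so $G$ has no global fixed point on $X^{(0)}$ and in particular fixes no hyperplane. If one could arrange that no two $G$-translates of $A$ \emph{cross}, meaning that among the four intersections $Ag\cap Ag'$, $Ag\cap A^{c}g'$, $A^{c}g\cap Ag'$, $A^{c}g\cap A^{c}g'$ at least one is always contained in finitely many cosets of $H$, then $X(A)$ would be one-dimensional, hence a $G$-tree, and the desired splitting would follow at once from the Niblo–Sageev–Roller duality. I would therefore try to minimise crossings: replace $A$ by an almost-equal set $A'$ with strictly fewer crossing $G$-translates, and iterate toward a cube complex that collapses to a tree; failing this, I would attempt a Caprace–Sageev essential-core argument on $X(A)$ to isolate a single hyperplane whose stabilizer, commensurable with a subgroup of $H$, furnishes the required edge group of a splitting.

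The principal obstacle is that, without any further hypothesis on the subgroup $H$, neither strategy is available in general. The crossing-minimisation procedure need not terminate, $X(A)$ can be infinite-dimensional, the $G$-action need not be proper or cocompact, and the essential-core machinery requires finite-dimensionality (or at least local finiteness of the hyperplane structure) to deliver a codimension-one hyperplane with a finite-index-controlled stabilizer. Any proof of the conjecture in the stated generality would have to supply a new mechanism — for instance, a limit argument on the Roller compactification of $X(A)$ producing $G$-invariant ``ideal'' half-spaces that still descend to a tree, or a transfinite descent along the lattice of $H$-almost invariant sets refining $A$ — to replace the finiteness input (finite height, finite splitting height, or relative quasiconvexity in a relatively hyperbolic ambient group) on which every known partial result relies. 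Controlling the dual cube complex $X(A)$ without any such finiteness assumption on $H$ in $G$ is the step I expect to be the main obstruction, and it is the reason Kropholler's conjecture, as stated, remains open in full generality.
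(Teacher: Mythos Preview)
The statement you were asked to prove is not a theorem of the paper but an open conjecture: the paper records it as Conjecture~\ref{conj:K} (Kropholler's conjecture) and does \emph{not} supply a proof. What the paper actually establishes are partial results---Theorem~\ref{thmA} and its corollaries---which verify the (equivalent) Kropholler--Roller formulation under the additional hypothesis that $H$ has finite splitting height in $G$. So there is no ``paper's own proof'' to compare your attempt against.

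That said, your proposal is an honest and accurate diagnosis rather than a proof, and you already recognise this in your final paragraph. A few remarks on the details. First, with the hypothesis $AH=A$ (right $H$-invariance) the natural pocset for Sageev's construction is built from \emph{left} translates $gA$, not right translates $Ag$; your choice would make the hyperplane stabiliser computation awkward. Second, your assertion that hyperplane stabilisers are commensurable with a subgroup of $H$ is correct (indeed $H$ has finite index in $\mathrm{Stab}(J)$), but this alone does not produce a splitting unless the complex collapses to a tree, as you note. Third, the two strategies you sketch---crossing minimisation and an essential-core argument \`a la Caprace--Sageev---are exactly the kinds of approaches that are known to require extra finiteness input, and the paper's finite-splitting-height hypothesis is precisely such an input: it guarantees that after boundedly many applications of Kropholler's corner argument one reaches a subgroup already known to be splitting-compatible (e.g.\ finite, virtually polycyclic, or commensurated), at which point Dunwoody's tree-cutting theorem applies to the Cayley subcomplex. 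Your closing assessment---that controlling $X(A)$ without any finiteness assumption on $H$ is the essential obstruction and that the conjecture remains open in full generality---is correct and agrees with the state of the art as the paper presents it.
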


\noindent  In \cite{Dun:fin}, Dunwoody outlined a proof of the conjecture, but his arguments in Section 3 of  \cite{Dun:fin} were later found to contain a gap \footnote{A personal communication with Martin Dunwoody and Alex Margolis, 2018.}. 
Kropholler's conjecture  is known  to hold  when either:
\begin{itemize}
\item[(i)]  $H$ and $G$ are Poincar\'{e} duality groups \cite{KR1, KR2, KR3, KR4}, 
\item[(ii)] $H$ is a virtually polycyclic group \cite{DunRoll}, 
\item[(iii)] $H$ is  finitely generated and commensurated in $G$, i.e.~$Comm_G(H)=G$ \cite{DunRoll}. 
\end{itemize}
\noindent We will give a new proof of Dunwoody-Roller's result (iii)  above which also covers the case when $H$ is not finitely generated.

According to Scott \cite{Scott}, $e(G, H)>1$ if and only if there exists an $H$-proper $H$-almost invariant subset $A$ of $G$ such that $HA=A$. Crucially for us, this is equivalent to the existence of an essential and in particular fixed-point-free action of $G$  on a CAT(0) cube complex where $H$ stabilises a hyperplane \cite{S1} (see Section \ref{sec:cubing}  and Remark \ref{rmk:fix}). Motivated by such geometric interpretations of relative ends, $H$ is said to have {\it codimension-1 in $G$} if $e(G, H)>1$. In the special case when $G$ acts simplicially on a tree without a fixed point, it has more than one end  relative to an edge stabiliser. The converse is not true however, since  for example many Coxeter groups \cite{Serre}, \cite{NiRe2} and Thompson's groups $T$, $V$ \cite{Far}  have Property (FA) and yet they act essentially on a CAT(0) cube complex.  So the existence of a codimension-1 subgroup does not guarantee a nontrivial splitting of $G$. Nonetheless, according to Kropholler's conjecture, if the associated subset $A$ is also right $H$-invariant, then $G$ splits nontrivially over a subgroup commensurable with a subgroup of $H$. This was conjectured by Kropholler and Roller in \cite{KR3}. 

\begin{conj}[Kropholler-Roller, \cite{KR3}]\label{conj:KR} Let $G$ be a finitely generated group and let $H$ be a subgroup. If $G$ contains an $H$-proper $H$-almost invariant subset $A$ such that $HAH=A$, then $G$ admits a nontrivial splitting over a subgroup commensurable with a subgroup of $H$.
\end{conj}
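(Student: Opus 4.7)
I would attack the conjecture under the finite splitting height hypothesis advertised in the abstract, since no approach is currently known in full generality. The overarching strategy is to convert the set-theoretic data $(G,H,A)$ into an essential action of $G$ on a CAT(0) cube complex $X$, use finite splitting height to control the combinatorics of hyperplanes in $X$, and finally collapse $X$ to a simplicial $G$-tree whose edge stabilisers are commensurable with subgroups of $H$, at which point Bass--Serre theory supplies the desired nontrivial splitting.

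First, I would apply the Sageev--Roller dualisation to the wall system generated by the $G$-translates of $A$ taken modulo $H$-finite symmetric difference. Because $A$ is $H$-proper and $H$-almost invariant, this yields a CAT(0) cube complex $X$ with an essential $G$-action and a distinguished hyperplane $\hat h_A$ whose stabiliser is commensurable with $H$. The extra hypothesis $HAH=A$ says that $A$ descends to a subset of the double coset space $H\backslash G/H$; geometrically, this rules out $H$ swapping the two halfspaces of $\hat h_A$ and promotes $G\cdot\hat h_A$ to a coherently oriented, $G$-equivariant wall system. Next, I would invoke the finite splitting height of $H$ in $G$ to bound the combinatorial dimension of transverse configurations inside $G\cdot\hat h_A$: intersections of $n$ pairwise-transverse hyperplanes in this orbit correspond to infinite intersections of chains of conjugates of $H$, so a splitting-height bound caps the length of such chains. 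This should force the parallelism relation on $G\cdot\hat h_A$ to have tractable equivalence classes, and collapsing $X$ (or its essential core) along parallelism should produce a simplicial $G$-tree $T$ whose edges come from parallelism classes and whose edge stabilisers are commensurable with subgroups of $H$. The essentiality of the cubulation together with the $H$-properness of $A$ should then prevent a global fixed point of $G$ on $T$.

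The main obstacle I anticipate is this final tree-extraction step. Many finitely generated groups possess codimension-$1$ subgroups while satisfying Property (FA)---the Coxeter groups and Thompson's groups $T,V$ cited in the introduction are explicit examples---so they can act essentially on CAT(0) cube complexes without admitting any splitting at all. The two-sided invariance $HAH=A$ must therefore do substantive work to rule out such obstructions, and the finite splitting height hypothesis must be sharp enough to prevent configurations in which $G$ stabilises a compact cube yet fails to act nontrivially on any tree extracted from the hyperplane combinatorics of the orbit $G\cdot\hat h_A$. Verifying that the correct $G$-invariant quotient of $X$ is really a tree, and that its edge stabilisers are commensurable with subgroups of $H$ rather than merely with subgroups of its commensurator in $G$, is where I expect the argument to be most delicate; this is presumably also the place where the new notion of finite splitting height plays its decisive role.
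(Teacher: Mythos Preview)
Your proposal has a genuine gap at precisely the step you flag as delicate, and the paper's argument takes a quite different route there.

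First, finite splitting height does \emph{not} bound the dimension of the dual cube complex. The hypothesis says only that sufficiently long intersections $\bigcap_{i=1}^{k} g_i H g_i^{-1}$ are \emph{splitting-compatible} in $G$ --- a black-box property --- not that they are finite. Even under the stronger finite-height hypothesis, transversality of hyperplanes $g_1J,\dots,g_nJ$ is a non-nesting condition on the translates $g_iA$, which does not by itself control $\bigcap g_iHg_i^{-1}$; bounding the width of $\Sigma_A$ requires something like bounded packing, which the paper invokes only in the later applications (Corollaries~\ref{correl},~\ref{corstr}), not in the main theorem. So your plan to cap transverse configurations and then ``collapse along parallelism'' to a tree does not get off the ground: the cube complex $X_A$ may well be infinite-dimensional, and there is no canonical $G$-invariant tree quotient.

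The paper never attempts such a collapse. Instead it runs a descent on the \emph{splitting obstruction}
\[
S_A(G,H)=\{g\in G : \text{none of the four corners } A^{(*)}\cap gA^{(*)} \text{ is empty}\}.
\]
There is a dichotomy: if $\pi_H(S_A(G,H))$ is finite, the hyperplane $J=(A,A^*)$ has only finitely many transverse partners, hence its carrier is compact, and Dunwoody's cutting-up-graphs theorem applied to the $1$-skeleton yields the splitting directly (Lemma~\ref{lem:compact}). If $\pi_H(S_A(G,H))$ is infinite, one picks $g\in S_A(G,H)$ outside a suitable stabiliser and applies \emph{Kropholler's corner lemma}: one of the four corners $A\cap gA,\dots$ is an $(H\cap H^g)$-almost-invariant set, right-invariant under $H$. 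Iterating this produces subgroups $H_j=H_{X_j}$ with $|X_j|$ strictly increasing in $G/H$ and $H_j$-proper $H_j$-almost-invariant sets $A_j$ satisfying $H_jA_jH=A_j$. After $n$ steps the finite-splitting-height hypothesis declares $H_{n+1}$ splitting-compatible, and the triple $(G,H_{n+1},A_{n+1})$ then yields the contradiction. The mechanism that links transversality to conjugate-intersections is thus Kropholler's corner, not any geometric collapse of $X_A$; this is the key lemma missing from your outline.
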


\noindent Of course, any group that satisfies Kropholler's conjecture also satisfies Conjecture \ref{conj:KR}. It is worth pointing out that Conjecture \ref{conj:KR} implies Kropholler's conjecture. This is because if there is an $H$-proper $H$-almost invariant subset $A$ satisfying $AH=A$, then there is an $L$-proper $L$-almost-invariant subset $B$ such that $LBL=B$ for a subgroup $L$ of $H$ \cite{DunRoll}.

\begin{deff}\label{def:sh} {\rm We say that  a subgroup $H$ is {\it splitting-compatible in $G$} if it satisfies Conjecture \ref{conj:KR}, that is, if $G$ contains an $H$-proper $H$-almost invariant subset $A$ such that $HAH=A$, then $G$ admits a nontrivial splitting over a subgroup commensurable with a subgroup of $H$.}
\end{deff}

\begin{remark}\label{remark:splitting-comp} By definition, if $G$ splits over a subgroup $H$, then $H$ is splitting-compatible in $G$. 
\end{remark}

\begin{thm}\label{thmA} Let $H$ be a subgroup of $G$.  Suppose there exists an integer $n>0$ such that for any $k>n$ distinct cosets $g_iH$, the intersection $\cap_{i=1}^{k} g_iH{g_i}^{-1}$  is splitting-compatible in $G$. Then $H$ is splitting-compatible in $G$. 
\end{thm}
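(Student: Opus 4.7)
The plan is to apply Sageev's dual CAT(0) cube complex construction to the wall $\{A, A^*\}$ defined by an $H$-proper, $H$-almost invariant subset $A \subseteq G$ with $HAH = A$, and then exploit the fact that stabilizers of higher-dimensional cubes are (up to commensurability) intersections of conjugates of $H$.

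\textbf{Step 1 (Dual cube complex).} I would first form the $G$-orbit of the wall $\{A, A^*\}$ and apply Sageev's construction to obtain a CAT(0) cube complex $X$ carrying an essential $G$-action. The two-sided invariance $HAH = A$ guarantees that $H$ does not swap the two sides of the wall, so together with $H$-properness one sees that the hyperplane $W_A$ corresponding to $A$ has stabilizer commensurable with $H$. More generally, the hyperplane corresponding to $gA$ has stabilizer commensurable with $gHg^{-1}$, and a $k$-dimensional cube $C$ in $X$ is dual to $k$ pairwise crossing hyperplanes indexed by distinct cosets $g_1H, \ldots, g_kH$, so that $\mathrm{Stab}(C)$ is commensurable with $\bigcap_{i=1}^{k} g_iHg_i^{-1}$.

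\textbf{Step 2 (High-dimensional case).} If $X$ contains a cube $C$ of dimension $k > n$, then $L := \mathrm{Stab}(C)$ is commensurable with $\bigcap_{i=1}^{k} g_iHg_i^{-1}$ and so is splitting-compatible in $G$ by hypothesis. To apply splitting-compatibility, I need to manufacture an $L$-proper, $L$-almost invariant subset $B \subseteq G$ satisfying $LBL = B$. My candidate is a half-space of $X$ bounded by one of the $k$ hyperplanes crossing $C$, pulled back to $G$ via an $L$-equivariant orbit map from a chosen vertex $v \in C$: left $L$-invariance is automatic since $L$ preserves the hyperplane without swapping sides, right $L$-invariance is forced by symmetrizing $B$ through the $L$-action on the other side, and properness follows from essentiality of the $G$-action on $X$ together with $H$-properness of $A$. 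Splitting-compatibility of $L$ then yields a splitting of $G$ over a subgroup commensurable with a subgroup of $L$; since $L \subseteq g_1Hg_1^{-1}$, conjugating the resulting graph-of-groups decomposition by $g_1^{-1}$ gives a splitting over a subgroup commensurable with a subgroup of $H$, as required.

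\textbf{Step 3 (Bounded-dimensional case).} If on the other hand $\dim X \leq n$, then $X$ is a finite-dimensional CAT(0) cube complex with an essential $G$-action, and hyperplane stabilizers are commensurable with conjugates of $H$. In this regime I would extract a splitting directly from $X$: either by locating a hyperplane $W$ whose $G$-orbit is non-crossing (so that the orbit quotient is a $G$-invariant tree yielding a splitting over $\mathrm{Stab}(W)$), or by applying the Caprace--Sageev rank-rigidity/essential-core machinery to identify a hyperplane whose stabilizer carries the splitting. Either way, the resulting edge stabilizer is commensurable with a conjugate of $H$ and hence with a subgroup of $H$.

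The chief obstacle will be Step 2, specifically the simultaneous verification of bi-invariance $LBL = B$ and $L$-properness for the constructed subset $B$; this demands a careful bookkeeping of how $L$ acts on the half-spaces meeting $C$ and on the corresponding orbit of $v$ in $G$. A secondary difficulty is that Step 3 is not automatic for general (non-hyperbolic) $G$, so the extraction of a splitting from a generic finite-dimensional essential action likely requires a reduction argument tailored to the hypothesis on $H$, perhaps by iteratively passing to a cube of maximal dimension and reusing the Step 2 construction in a slightly weaker form.
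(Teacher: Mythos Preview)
Your dichotomy on the dimension of the dual cube complex is not the right organizing principle, and both branches have real gaps.

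\textbf{Step 2.} The half-space set $B=\{g\in G: gv\in X^+\}$ associated to the hyperplane $g_iJ$ is only $g_iHg_i^{-1}$-almost invariant, not $L$-almost invariant. Concretely, $B$ differs from a translate $g_iA$ by something controlled by $g_iHg_i^{-1}$, and $B+Bs$ is $g_iHg_i^{-1}$-finite; but when $[g_iHg_i^{-1}:L]=\infty$ this says nothing about $L$-finiteness. So you cannot feed $(G,L,B)$ into the definition of splitting-compatible. What actually produces almost-invariant sets for \emph{intersections} of conjugates is Kropholler's corner move: if $A_j$ is $H_j$-almost invariant and $g$ lies in the splitting obstruction $S_{A_j}(G,H_j)$, then a suitable corner $A_j\cap gA_j^{(\ast)}$ is $(H_j\cap H_j^{g})$-almost invariant. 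This is exactly how the paper descends. Note also that corners may fail to be proper; the paper absorbs this via a case analysis showing that a non-proper corner forces $[H:H\cap x^{-1}Kx]<\infty$, so one can reuse $A$ itself at the next stage.

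\textbf{Step 3.} Finite dimensionality of an essential $G$-cube complex does \emph{not} yield a splitting in general; the paper itself cites Coxeter groups and Thompson's $T,V$ as groups with property (FA) that nonetheless act essentially on CAT(0) cube complexes. Caprace--Sageev rank rigidity gives irreducibility or a product decomposition, not a tree. So this branch is simply missing.

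The paper's argument avoids dimension altogether. It proceeds by contradiction: assuming no splitting, it shows that at every stage the splitting obstruction $S_{A_j}(G,H_j)$ must be $H_j$-infinite (else the carrier of $J$ is compact and Dunwoody's cut theorem gives a splitting). This supplies an element $g\notin\mathrm{Stab}(X_j)$, Kropholler's corner then produces the next $(H_{j+1},A_{j+1})$ with $|X_{j+1}|>|X_j|$, and after $n$ steps $H_{n+1}$ is a pointwise stabilizer of more than $n$ cosets and hence splitting-compatible by hypothesis, contradicting the assumption.
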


 Let $H$ be a subgroup of  $G$. Recall that the height of $H$ in $G$, denoted $height_G(H)$, is the least integer $n$ such that for any $n+1$ distinct cosets $g_iH$, the intersection $\cap_{i=1}^{n+1}g_iH{g_i}^{-1}$ is finite. If such  $n$ exists, then $H$ is said to have finite height equal to $n$ in $G$. When $G$ is hyperbolic and $H$ is  quasiconvex in $G$, then $H$ has finite height in $G$, it has finite index in the stabiliser $H'$ of its limit set and $Comm_G(H')=H'$. Using these properties, Sageev in \cite{S2}  showed that Conjecture \ref{conj:KR} holds when $G$ is hyperbolic and $H$ is a quasiconvex subgroup in $G$.   We obtain the following generalisation of this result. 
\begin{cor}\label{cor:all} Let $G$ be a finitely generated group. 
\begin{enumerate}[label=($\roman*$)]
\item If $H$ is a subgroup of finite height in $G$, then Conjecture \ref{conj:KR} holds.
\item If $H$ is a relatively quasiconvex subgroup of a relatively hyperbolic group $G$ with splitting-compatible parabolic subgroups, then Conjecture \ref{conj:KR} holds.
\item If $H$ is a relatively quasiconvex subgroup of a relatively hyperbolic group $G$ with virtually polycyclic peripheral subgroups, then Conjecture \ref{conj:KR} holds.
\end{enumerate}
\end{cor}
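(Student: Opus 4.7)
The plan is to apply Theorem \ref{thmA} in each case, by showing that the intersections $\cap_{i=1}^{k} g_i H g_i^{-1}$ of sufficiently many conjugates of $H$ are splitting-compatible subgroups of $G$. Two preliminary facts are used throughout. First, every finite subgroup $F$ of $G$ is splitting-compatible: an $F$-proper $F$-almost invariant subset $A$ is automatically a proper almost-invariant subset of $G$, so $e(G)>1$, and Stallings' theorem produces a splitting of $G$ over a finite subgroup, which is trivially commensurable with a subgroup of $F$. Second, by Dunwoody--Roller, every virtually polycyclic subgroup of a finitely generated group is splitting-compatible (this is one of the established cases of Conjecture \ref{conj:K}, and hence of Conjecture \ref{conj:KR}, recorded in the introduction).

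For part (i), the definition of finite height provides $n>0$ such that whenever $g_1 H, \ldots, g_k H$ are $k>n$ distinct cosets, the intersection $\cap_{i=1}^{k} g_i H g_i^{-1}$ is finite and hence splitting-compatible by the first preliminary fact. Theorem \ref{thmA} then gives that $H$ is splitting-compatible in $G$.

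For part (ii), the key input is the relative height theorem of Hruska--Wise for relatively quasiconvex subgroups of relatively hyperbolic groups: there exists $n>0$ such that for any $k>n$ distinct cosets $g_i H$, the intersection $\cap_{i=1}^{k} g_i H g_i^{-1}$ is either finite or parabolic, i.e.\ contained in a conjugate of a peripheral subgroup. In the first case the intersection is splitting-compatible by the finite-subgroup argument; in the second case it is splitting-compatible by the hypothesis that parabolic subgroups of $G$ are splitting-compatible. Applying Theorem \ref{thmA} then finishes the case. Part (iii) is now immediate from (ii): any subgroup of a virtually polycyclic group is virtually polycyclic, so every parabolic subgroup of $G$ is virtually polycyclic, and hence splitting-compatible by the second preliminary fact.

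The main obstacle is quoting the Hruska--Wise relative height theorem in the exact form needed for part (ii), and ensuring that the hypothesis ``splitting-compatible parabolic subgroups'' is interpreted to cover \emph{all} subgroups of conjugates of peripheral subgroups (not only the peripheral subgroups themselves) — since the intersections produced by the relative height theorem need not have finite index in any peripheral conjugate. Once this interpretational point is settled, the reduction to Theorem \ref{thmA} is immediate in all three cases.
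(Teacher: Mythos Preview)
Your proposal is correct and follows essentially the same route as the paper: each part reduces to Theorem \ref{thmA} by verifying that large enough intersections of conjugates are splitting-compatible, using Stallings for finite intersections, Hruska--Wise (Theorem 1.4 of \cite{HW1}) for part (ii), and Dunwoody--Roller for part (iii). Your interpretational concern in part (ii) is resolved by the paper's convention (Example \ref{ex:rel}) that a \emph{parabolic} subgroup is by definition any subgroup conjugate into a peripheral subgroup, so the hypothesis already covers the intersections produced by the relative height theorem.
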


\noindent Stable subgroups \cite{DuTa}  or more generally strongly quasiconvex subgroups \cite{Tran} are natural generalisations of quasiconvex subgroups of hyperbolic groups.   They satisfy similar properties such as finite height, width and bounded packing \cite{AMST}, \cite{Tran}. Stable subgroups are precisely those strongly quasiconvex subgroups that are hyperbolic \cite[Theorem 4.8]{Tran}. Examples of stable subgroups are:
\begin{itemize}
\item[-] convex cocompact subgroups of the mapping class group of a connected, orientable surface ${\rm Mod}(S)$,
 \item[-] convex cocompact subgroups of the outer automorphism group ${\rm Out}(F_n)$ of the free group on $n\geq 3$ generators,
 \item[-] finitely generated subgroups of a right-angled Artin group quasi-isometrically embedded in the extension graph.
 \end{itemize}
 Examples of strongly quasiconvex subgroups also include:
\begin{itemize}
\item[-] peripheral subgroups of a relatively hyperbolic groups,
 \item[-]   hyperbolically embedded subgroups of finitely generated groups.
\end{itemize}
So by Corollary \ref{cor:all} (i), all of the above subgroups are splitting-compatible in their respective ambient groups, i.e.~they satisfy Conjecture \ref{conj:KR}.

\begin{cor}\label{corT} Let $G$ be a finitely generated group. Suppose  $H$ is a finite-height codimension-1 subgroup.  If every action of $H$ on a CAT(0) cube complex has a global fixed point, in particular, if $H$ satisfies Kazhdan's property (T), then $G$ splits over a subgroup commensurable with a subgroup of $H$.
\end{cor}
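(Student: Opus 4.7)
The plan is to deduce Corollary \ref{corT} from Corollary \ref{cor:all}(i) by using the CAT(0) cube complex fixed-point hypothesis to upgrade the one-sided invariance $HA=A$ (coming from codimension-1) into the two-sided invariance $HBH=B$ required by Conjecture \ref{conj:KR}.

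First, since $H$ is codimension-1 in $G$, by Scott's characterization recalled in the introduction there exists an $H$-proper, $H$-almost invariant subset $A \subseteq G$ with $HA=A$. Apply Sageev's dual cube complex construction to the $G$-orbit of the pair $\{A, A^*\}$ to obtain a CAT(0) cube complex $X$ on which $G$ acts, together with a distinguished hyperplane $\hat{h}$ dual to $A$. Because $HA=A$, the subgroup $H$ stabilizes $\hat{h}$ and preserves its two sides (it does not swap $A$ with $A^*$).

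Second, by the fixed-point hypothesis the restricted action $H\curvearrowright X$ has a global fixed vertex $v$. Define
\[
B \;=\; \{\, g\in G : gv \text{ lies on the } A\text{-side of } \hat{h}\,\}.
\]
The equality $HBH=B$ is immediate: for $h\in H$ and $g\in B$, the point $(hg)v = h(gv)$ lies on the same side of $\hat{h}$ as $gv$ because $h$ preserves the sides of $\hat{h}$, while $(gh)v = g(hv) = gv$ since $h$ fixes $v$. Moreover, $B$ is $H$-almost invariant and $H$-proper. Indeed, writing $v_0$ for a base vertex such that $A = \{g : gv_0 \in \hat{h}^+\}$, an element $g$ belongs to $A\triangle B$ exactly when $\hat{h}$ separates $gv_0$ from $gv$, equivalently when $g^{-1}\hat{h}$ lies among the finitely many hyperplanes separating $v_0$ from $v$; this presents $A\triangle B$ as a finite union of cosets of the hyperplane stabilizer $\mathrm{Stab}_G(\hat{h})\supseteq H$. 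A parallel calculation for $B\triangle Bg_0$ realizes it as a finite union of such cosets, giving $H$-almost invariance, while $H$-properness is inherited from that of $A$.

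Finally, applying Corollary \ref{cor:all}(i) to the finite-height subgroup $H$ together with the set $B$ satisfying $HBH=B$ produces the desired nontrivial splitting of $G$ over a subgroup commensurable with a subgroup of $H$.

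The main obstacle is the finiteness of $[\mathrm{Stab}_G(\hat{h}):H]$: if this index were infinite, a single right coset of $\mathrm{Stab}_G(\hat{h})$ would not decompose into finitely many left $H$-cosets, and the $H$-almost invariance of $B$ would fail. For finite-height codimension-1 subgroups the hyperplane stabilizer in the Sageev dual complex should be commensurable with $H$; alternatively one replaces $H$ by $\mathrm{Stab}_G(\hat{h})$ (verifying that this enlargement still has finite height and still fixes a point on every CAT(0) cube complex) and then translates the resulting splitting back to one over a subgroup commensurable with a subgroup of $H$ using that $H$ is commensurable with $\mathrm{Stab}_G(\hat{h})$.
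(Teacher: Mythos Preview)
Your approach is essentially the paper's: obtain the dual cube complex, use the fixed-point hypothesis on $H$ to manufacture a bi-invariant set $B$ with $HBH=B$, and then invoke Corollary~\ref{cor:all}(i). The paper compresses your second step into a citation of Sageev's Lemma~2.5 in \cite{S2}, and it applies the fixed-point hypothesis to the $H$-action on the hyperplane $J$ (itself a CAT(0) cube complex) rather than on all of $X_A$; your choice works equally well, and your explicit verification that $B$ is $H$-proper and $H$-almost invariant is correct.

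Two small points. First, the hypothesis gives a global fixed \emph{point}, not necessarily a vertex; your definition of $B$ and the subsequent argument go through verbatim for any fixed point $v$, so just drop the word ``vertex''. Second, and more importantly, your ``main obstacle'' is not an obstacle at all: Theorem~\ref{thm:Sageev}(i) (Sageev) already guarantees that $H$ has finite index in $\mathrm{Stab}_G(\hat h)$ in the dual cube complex $X_A$. Hence each right coset of $\mathrm{Stab}_G(\hat h)$ is a finite union of left $H$-cosets, and your computation of $A\triangle B$ and $B\triangle Bg_0$ as finite unions of such cosets immediately yields $H$-finiteness. The final paragraph of your proposal can therefore be deleted.
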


Corollary \ref{corT} is analogous to  a theorem  of Kar and Niblo \cite[Theorem 2]{KN} of splitting a Poincar\'{e} duality group over a codimension-1 property (T) Poincar\'{e} duality subgroup. Our next two applications are motivated by a result of Sageev   \cite[Corollary 4.3]{S2} which gives a decomposition of a hyperbolic group by a chain  of successively codimension-1 quasiconvex subgroups. 

\begin{cor}\label{correl} Let $G$ be a relatively hyperbolic group with virtually polycyclic peripheral subgroups.  Suppose  $H$ is a codimension-1,   relatively quasiconvex subgroup of $G$. Then either $G$ splits nontrivially over a subgroup commensurable with a subgroup of $H$ or there exist  
$$H=H_1> H_2>\dots >H_{k+1},$$
where $g_i\in G$, $H_{i+1}=H_i\cap g_iH_ig_i^{-1}$ is a codimension-$1$, relatively quasiconvex subgroup of $H_{i}$, and $H_{k}$ splits nontrivially over a subgroup commensurable with a subgroup of $H_{k+1}$.\end{cor}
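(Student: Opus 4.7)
The plan is to set up an iterative descent modelled on Sageev's argument in \cite[Corollary 4.3]{S2}, with Corollary \ref{cor:all}(iii) supplying the splitting at each stage. Setting $H_1 := H$, I examine a dual CAT(0) cube complex $X$ associated to $H_1$: since $H_1$ is codimension-$1$ in $G$, Sageev's construction gives an essential $G$-action on $X$ with a hyperplane $\hat h$ stabilized by $H_1$. If no pair of hyperplanes in the orbit $G \cdot \hat h$ crosses, then the dual tree structure yields a nontrivial action of $G$ on a tree with edge stabilizers commensurable with $H_1$, giving the first alternative, namely a splitting of $G$ over a subgroup commensurable with a subgroup of $H_1 = H$.

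Otherwise, there exists $g_1 \in G$ with $g_1 \hat h$ crossing $\hat h$, and the transversal intersection of hyperplanes provides a codimension-$1$ subgroup $H_2 := H_1 \cap g_1 H_1 g_1^{-1}$ of $H_1$. Intersections of relatively quasiconvex subgroups in a relatively hyperbolic group with virtually polycyclic peripheral subgroups are again relatively quasiconvex; combined with the fact that $H_1$ itself inherits such a relatively hyperbolic structure from $G$, this shows $H_2$ is relatively quasiconvex in $H_1$. I then apply Corollary \ref{cor:all}(iii) to the pair $(H_1, H_2)$: either $H_1$ splits nontrivially over a subgroup commensurable with a subgroup of $H_2$, in which case the chain terminates at $k = 1$, or the cube-complex dichotomy produces $g_2 \in G$ and a codimension-$1$, relatively quasiconvex $H_3 := H_2 \cap g_2 H_2 g_2^{-1}$ of $H_2$. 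Iterating constructs the chain $H = H_1 > H_2 > H_3 > \cdots$.

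Termination of this descent is the main obstacle. The key input is that relatively quasiconvex subgroups in $G$ have finite splitting height, the notion introduced in this paper; consequently the sequence of intersections of conjugates of $H$ can only strictly decrease while remaining codimension-$1$ for a uniformly bounded number of steps. Once this bound is reached at some $H_{k+1}$, the CAT(0) cube complex analysis applied to $(H_k, H_{k+1})$ must fall into the splitting case, delivering a nontrivial splitting of $H_k$ over a subgroup commensurable with a subgroup of $H_{k+1}$. Verifying the inductive preservation of the codimension-$1$ and relative quasiconvexity properties under each descent step, and converting the finite-splitting-height bound precisely into the bound $k$ appearing in the statement, are the technical points I would work out in full.
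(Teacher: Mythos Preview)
Your termination argument has a genuine gap. Finite splitting height of $H$ in $G$ (Definition~\ref{def:sh} together with Lemma~\ref{lem:sh}) says only that sufficiently deep intersections of conjugates of $H$ are splitting-compatible \emph{in $G$}: if such an intersection $L$ admits a bi-invariant proper almost-invariant set, then \emph{$G$} splits. This is a statement about $G$, not about $H_k$; it gives no reason why the chain $H_1>H_2>\cdots$ should stop, nor why $H_k$ should split over a subgroup of $H_{k+1}$. In Sageev's hyperbolic case the chain terminates because $H$ has finite \emph{height}, so eventually $H_{k+1}$ is finite and Stallings applies to $H_k$. Here the intersections can be infinite parabolic (virtually polycyclic), so that mechanism is unavailable, and finite splitting height is not a substitute.

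The paper terminates the chain by a completely different route: bounded packing and dimension. By Proposition~\ref{propY} (Yang) every subgroup of a virtually polycyclic group has bounded packing, so by Proposition~\ref{propHW} (Hruska--Wise) $H$ has bounded packing in $G$; then Proposition~\ref{prop:pack} lets one choose $A$ so that $X_A$ is finite-dimensional, say of dimension $n$. The hyperplane $J$ is then a CAT(0) cube complex of dimension $n-1$, so the width of $\Sigma_{A_2}$ and hence the dimension of $X_{A_2}$ is at most $n-1$. Each descent step strictly drops the dimension, forcing termination in at most $n$ steps; at the last step the dual complex is a tree and $H_k$ splits directly.

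A secondary issue: your dichotomy ``no crossings / some crossing'' is not the one that drives the argument. The paper instead asks whether $H$ fixes a point on $J$. If it does, Lemma~2.5 of \cite{S2} produces a bi-invariant set and Corollary~\ref{cor:all}(iii) then splits $G$. If it does not, the $H$-action on $J$ is essential (Remark~\ref{rmk:fix}) and the descent to $H_2$ is legitimate. Merely exhibiting a crossing $gJ$ does not by itself guarantee that $H_2=H\cap H^g$ is codimension-$1$ in $H$; you need the $H$-action on $J$ to be fixed-point-free. Your handling of relative quasiconvexity under intersection and passage to $H_i$ is fine and matches the paper's use of \cite{Hru}.
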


\begin{cor}\label{corstr}  Let $G$ be a finitely generated group. Suppose  $H$ is a codimension-1,   strongly quasiconvex subgroup of $G$. Then either $G$ splits nontrivially over a subgroup commensurable with a subgroup of $H$ or there exist  
$$H=H_1> H_2>\dots >H_{k+1},$$
where $g_i\in G$, $H_{i+1}=H_i\cap g_iH_ig_i^{-1}$  is a codimension-$1$, strongly quasiconvex subgroup of $H_{i}$, and $H_{k}$ splits nontrivially over a subgroup commensurable with a subgroup of $H_{k+1}$.\end{cor}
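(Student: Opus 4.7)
The approach is to induct on $n := height_G(H)$, which is finite because $H$ is strongly quasiconvex in $G$.  For the base case $n = 1$, the subgroup $H$ is almost malnormal in $G$, so Corollary~\ref{cor:all}(i), combined with the codimension-$1$ hypothesis, directly yields a splitting of $G$ over a subgroup commensurable with a subgroup of $H$; the chain is trivial.

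For the inductive step with $n > 1$: if $H$ already yields a splitting of $G$ over a commensurable subgroup, we are done.  Otherwise, consider the dual CAT(0) cube complex $X$ associated to the codimension-$1$ pair $(G, H)$.  Here $G$ acts and $H$ stabilises a hyperplane $\hat h$; the failure of $G$ to split forces, via Sageev's cubical splitting theorem, the existence of $g \in G$ such that $\hat h$ and $g\hat h$ cross in $X$.  Setting $H_2 := H \cap gHg^{-1}$, the crossing geometry implies $H_2$ is codimension-$1$ in $H$.  By Tran~\cite{Tran}, intersections of strongly quasiconvex subgroups are strongly quasiconvex, so $H_2$ is strongly quasiconvex in $G$ and thus in $H$.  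A standard pigeonhole count on distinct conjugates gives $height_H(H_2) < n$.

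Applying the inductive hypothesis to the pair $(H, H_2)$ produces either a splitting of $H$ over a subgroup commensurable with a subgroup of $H_2$, or a chain $H_2 > H_3 > \cdots > H_{k+1}$ in $H$ terminating in a splitting; in either case, prepending $H_1 = H$ completes the desired chain in $G$.

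The main obstacle is the cube-complex step that locates $g \in G$ making $g\hat h$ cross $\hat h$ whenever $G$ fails to split, together with the verification that the resulting $H_2$ is codimension-$1$ in $H$.  This is the standard Sageev-style cubical analysis and is parallel to the argument behind Corollary~\ref{correl}: indeed the proof of Corollary~\ref{corstr} should mirror that of Corollary~\ref{correl} closely, substituting Corollary~\ref{cor:all}(i) for Corollary~\ref{cor:all}(iii), since the only structural features of the subgroup class used are finite height, closure of the class under intersection with conjugates, and splitting-compatibility of finite-height subgroups.
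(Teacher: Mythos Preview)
Your termination mechanism (induction on $height_G(H)$, with the pigeonhole check that $height_H(H\cap H^g)<height_G(H)$ when $g\notin H$) is a legitimate alternative to the paper's, which instead invokes Tran's bounded packing for strongly quasiconvex subgroups and Proposition~\ref{prop:pack} to make the dual complex finite dimensional, then inducts on that dimension. Either descent works once the rest of the argument is in place.

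The genuine gap is the step ``the crossing geometry implies $H_2$ is codimension-$1$ in $H$.'' The mere existence of \emph{some} $g$ with $g\hat h$ transverse to $\hat h$ does not make $H\cap H^g$ codimension-$1$ in $H$; a single non-nested pair $(A,gA)$ gives nonempty corners but not an $H_2$-proper $H_2$-almost-invariant subset of $H$. What the paper actually uses (mirroring Corollary~\ref{cor:rel}) is the dichotomy on the \emph{action of $H$ on the hyperplane $J$}: if $H$ fixes a point on $J$, then Lemma~2.5 of \cite{S2} produces a bi-invariant set $B$ with $HBH=B$, and Corollary~\ref{cor:all}(i) gives the splitting of $G$; if $H$ does not fix a point on $J$, then $H$ acts essentially on $J$ (Remark~\ref{rmk:fix}), and it is \emph{this} essential action that singles out a particular $g_1$ for which $H_2=H\cap H^{g_1}$ is codimension-$1$ in $H$. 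Thus Corollary~\ref{cor:all}(i) is invoked at \emph{every} stage of the descent, not only at the base. The same issue undermines your base case: codimension-$1$ supplies only $HA=A$, not $HAH=A$, so Corollary~\ref{cor:all}(i) does not apply directly, and even at height $1$ the outcome can be a length-$2$ chain (with $H_2$ finite and $H=H_1$ splitting over it by Stallings) rather than a splitting of $G$.
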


When $G$ is hyperbolic and $H$ is a codimension-1 quasiconvex subgroup, then the action of $G$ on a dual CAT(0) cube complex is cocompact \cite{S2}. Finding suitable codimension-1 subgroups in $G$ is generally the first step in constructing a proper and cocompact action of $G$ on a CAT(0) cube complex and thus cocompactly cubulating $G$ \cite{BerWise}. Such cocompact cubulations have been instrumental in understanding properties of hyperbolic 3-manifold groups and resolving the Virtual Haken Conjecture by the work of Agol \cite{Agol} and Haglund-Wise \cite{HagWise}.

\subsection*{Methods involved} In \cite{S1}, Sageev gave a new characterisation of relative ends. According to Theorem 3.1 of \cite{S1}, $e(G,H)>1$ if and only if  $G$ acts essentially on the dual  CAT(0) cube complex $X_A$ with one orbit of hyperplanes such that $H$ has finite index in a hyperplane stabiliser. 
We define a cubical subcomplex $C_A\subseteq X_A$ which we call the Cayley subcomplex of $X_A$. The group $G$ acts cocompactly on $C_A$. We show that when $H$ is commensurated in $G$, then the intersections of the hyperplanes of $X_A$ with $C_A$ are compact, implying that $C_A$ has more than one end. By applying  Dunwoody's theorem  \cite{Dun:cut} to $C_A$, we obtain Theorem \ref{thm:comm} which extends Dunwoody's and Roller's theorem from \cite{DunRoll}. 

We then consider the splitting obstruction $S_A(G, H)$ for the triple $(G, H, A)$ introduced by Niblo in \cite{Niblo}. The splitting obstruction is a subset of $G$ that measures to some extent the failure of $X_A$ to be a tree.  If $S_A(G, H)$ is `controlled', then one can still obtain a splitting of $G$ over a subgroup commensurable with $H$ \cite{Niblo}.  We prove that if  $S_A(G, H)$ stays outside of the controlled parameters, then Kropholler's corner argument can be applied repeatedly to show that there are sufficiently many distinct cosets $g_iH$ so that the intersection $L=\cap g_iH{g_i}^{-1}$ is splitting-compatible as well as there exists $L$-proper, $L$-almost invariant subset of $G$. This connection between the splitting obstruction and Kropholler's corner argument is the key new input that allows us to generalise Theorem 4.2  of Sageev in \cite{S2}. Hence several aforementioned properties of quasiconvex subgroups of hyperbolic groups  essential in Sageev's proof are no longer needed  in the proof of Theorem \ref{thmA}. Corollary \ref{cor:all} follows from Theorems \ref{thmA} by using the fact that finite or more generally virtually polycyclic subgroups are splitting-compatible.  We apply Corollary \ref{cor:all} (i) in the case when $H$ has Kazhdan's property (T) and derive Corollary \ref{corT}.

 In \cite{HW1}, Hruska and Wise, conjectured that any subgroup of a virtually polycyclic group has bounded packing. This was later verified by Yang \cite{Yang}. Combining  this result with a theorem of Hruska and Wise on bounded packing of relatively hyperbolic groups, one obtains that any relatively quasiconvex subgroup $H$ of a relatively hyperbolic group $G$ with virtually polycyclic peripheral subgroups has bounded packing in $G$. We then use Sageev's observation that when $H$ is codimension-$1$ and has bounded packing in $G$,  the dual cube complex  can be constructed to be finite dimensional. Corollary \ref{correl} is then derived using Corollary  \ref{cor:all}  (iii). The proof of Corollary \ref{corstr} is analogous to the proof of Corollary \ref{correl}.

\subsection*{Acknowledgments}  I would like to thank Martin Dunwoody, Peter Kropholler, Alex Margolis, Armando Martino,  Ashot Minasyan and Graham Niblo for many helpful comments and discussions.

\section{Dual cube complex and Cayley subcomplex}\label{sec:cubing}

Given a finitely generated group $G$ and  a subgroup $H \leq G$, let $\G_H(G, S)$ denote the Schreier coset graph, i.e.~the quotient of the Cayley graph of $G$ by the action of $H$.  Houghton \cite{Hough} introduced  the number of ends of the pair of groups $(G,H)$ as $e(G,H) = e(\G_H(G, S))$. As in the classical case, the number of relative ends  does not depend on the finite generating set $S$ of $G$. Differently from $e(G)$ however, the number of relative ends  can take any nonnegative integer value \cite{Scott}.  

A subset of $G$ is said to be {\it $H$-finite}, if it is contained in finitely many right cosets of $H$. A subset $A$ of $G$ is {\it $H$-proper}  if $A$ and its complement $A^*$ are not $H$-finite. $A$ is said be {\it $H$-almost invariant} if the symmetric difference $A+ Ag$ is $H$-finite,  $\forall g\in G$.   One has that $e(G,H)>1$ if and only if there exists an $H$-proper $H$-almost invariant subset $A\subset G$ such that $HA=A$. According to Theorem 1.2 in \cite{NibSag},  $e(G, H)>1$ if and only if there is a fixed-point-free action of $G$ on a CAT(0) cube complex $X$ with one orbit hyperplanes where $H$ is a hyperplane stabiliser.

We refer the reader to \cite{HW2}, \cite{Sag:intro} for the definition of a CAT(0) cube complex and its basic properties some of which we now recall. Let $X$ be a CAT(0) cube complex. An {\it (oriented) combinatorial hyperplane} of $X$ is an equivalence class of (oriented) edges where the equivalence relation is generated by identifying opposite edges of any square in $X$. Given a combinatorial hyperplane $J$, one defines the associated geometric hyperplane as the union of all the dual blocks whose vertices are the midpoints of the edges in $J$. When there is no confusion, we will use the term {`hyperplane'} when referring to either combinatorial or geometric hyperplanes. The {\it carrier} of the combinatorial hyperplane $J$ is the is the union of all the cubes in $X$ that contain an edge of $J$.  Two distinct hyperplanes $I$ and $J$ are said to be {\it transverse} if the associated  geometric hyperplanes intersect. A  hyperplane $J$ partitions $X$ into two connected components $X^{\pm}$ called half-spaces. 

We will always assume that an action of a group on a cube complex is by automorphisms. An  action of  a group $G$ on a CAT(0) cube complex $X$ is said to be {\it essential} if there is a hyperplane $J$ of $X$ and a vertex $v\in X$ such that both $B=\{g\in G \; | \;  gv\in X^{+}\}$  and $B^*=\{g\in G \; | \;  gv\in X^{-}\}$ are not $Stab(J)$-finite.  In this case, $B$ is also $Stab(J)$-almost-invariant. The converse also holds.

\begin{thm}[Sageev \cite{S1}]\label{thm:Sageev} Let $G$ be a finitely generated group and $H$ be a subgroup. If $A$ is an $H$-proper $H$-almost invariant subset of $G$ and $HA=A$, then, there is a CAT(0) cube complex $X_A$ on which $G$ acts essentially with one orbit of hyperplanes such that
\begin{enumerate}[($i$)]
\item $H$ has finite index in the stabiliser of the hyperplane $J=(A, A^*)$ in $X_A$.
\item There is a natural correspondence $$gJ\Leftrightarrow (gA, gA^*), \; \forall g\in G.$$
\item Distinct hyperplanes $g_1J$ and $g_2J$ are transverse if and only if $g_1A$ and $g_2A$ are not nested. 
\item If $\Sigma_A$ has finite width, then $X_A$ is finite dimensional.
\item If $\Sigma_A$ is nested, i.e. of width $1$, then $X_A$ is a tree.
\end{enumerate}
\end{thm}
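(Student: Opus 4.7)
The plan is to reconstruct Sageev's construction of the dual cube complex from the discrete pocset generated by the $G$-translates of $A$.

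First, form
\[ \Sigma_A = \{\, gA : g \in G\,\} \cup \{\, gA^* : g \in G\,\}, \]
ordered by inclusion and endowed with the complement involution $B \mapsto B^*$. Because $HA=A$, the left $G$-action on subsets of $G$ descends to $\Sigma_A$ by pocset automorphisms commuting with the involution. $H$-properness of $A$ ensures that $A$ and $A^*$ are incomparable under inclusion, and $H$-almost invariance implies that $g_1 A + g_2 A$ is $H$-finite for all $g_1,g_2 \in G$; hence only finitely many members of $\Sigma_A$ sit strictly between any two nested elements, so $\Sigma_A$ is a discrete pocset.

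Next, apply Sageev's dual cube complex construction to $\Sigma_A$: vertices of $X_A$ are orientations $\sigma \subseteq \Sigma_A$ that contain exactly one of $B,B^*$ from each wall, are upward closed under inclusion, and satisfy the descending chain condition; two orientations are joined by an edge when they differ on a single wall, and higher-dimensional cubes are filled in wherever their $1$-skeleton appears. This yields a connected CAT(0) cube complex, and the $G$-action on $\Sigma_A$ induces an automorphism action on $X_A$ with one orbit of hyperplanes, each corresponding naturally to an unordered pair $\{gA, gA^*\}$; this proves (ii). Two hyperplanes $g_1 J$ and $g_2 J$ cross in $X_A$ precisely when all four corner intersections $g_1 A^{\pm} \cap g_2 A^{\pm}$ are nonempty, which is exactly the failure of $g_1A$ and $g_2 A$ to be nested, giving (iii).

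For essentiality, a choice of base vertex $v$ picks a side of $J$, and $H$-properness of $A$ furnishes group elements moving $v$ arbitrarily deep into each half-space bounded by $J$, so the $G$-action is essential. The stabiliser $\mathrm{Stab}_G(J) = \{\, g \in G : gA \in \{A, A^*\}\,\}$ contains $H$ since $HA = A$. To bound the index, observe that $\mathrm{Stab}_G(J)$ acts on the two-element set $\{A, A^*\}$ and that elements fixing $A$ setwise must permute the finitely many $H$-cosets on which $A$ and $gA$ disagree; combining these two observations confines $\mathrm{Stab}_G(J)$ to finitely many cosets of $H$, yielding (i). Finally, (iv) and (v) follow from the pocset-cube complex dictionary: $\dim X_A$ equals the supremum over pairwise transverse subsets of $\Sigma_A$, so finite width forces finite dimension, while width $1$ means no two walls cross, so $X_A$ is square-free and therefore a tree.

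The main obstacle is the finite-index assertion in (i), which genuinely exploits $HA = A$ (as opposed to mere $H$-almost invariance) together with combinatorial control of $\mathrm{Stab}_G(J)$ on the $H$-finite symmetric differences; the remaining steps are a careful but formal verification of Sageev's axioms, with the discrete pocset property being the one place where $H$-almost invariance must be invoked to rule out accumulation of nested translates.
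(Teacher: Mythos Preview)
The paper does not supply its own proof of this theorem: it is quoted as Sageev's result from \cite{S1}, and the paragraph following the statement only \emph{recalls} the construction of $X_A$ (the two ultrafilter conditions on a vertex $V\subset\Sigma_A$, passage to the connected component $\Gamma'$ containing the principal vertices $V_g$, and filling in cubes) so that the Cayley subcomplex $C_A$ can be defined. There is therefore no in-paper proof to compare against beyond that sketch of the construction, which your outline matches in substance; your use of the descending chain condition in place of ``connected component of the principal vertices'' is an equivalent formulation in the discrete setting.

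As a summary of Sageev's argument your treatment of (ii)--(v) and of essentiality is adequate, though you are asserting rather than proving the hard parts (simple connectivity and the CAT(0) link condition for $X_A$). The real gap is your argument for (i). You write that ``elements fixing $A$ setwise must permute the finitely many $H$-cosets on which $A$ and $gA$ disagree,'' but if $gA=A$ then $A$ and $gA$ disagree on \emph{no} cosets, so this clause is vacuous and cannot bound the index. The correct mechanism goes through the coboundary: with $S$ a finite generating set, the set $\delta A=\bigcup_{s\in S}(A\cap A^*s^{-1})$ is nonempty (by $H$-properness and connectedness of the Cayley graph) and $H$-finite (by $H$-almost invariance), say $\delta A\subseteq HF$ with $F$ finite. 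If $gA=A$ then $g\,\delta A=\delta A$, so for any fixed $x\in\delta A$ one has $gx\in\delta A\subseteq HF$, whence $g\in HFx^{-1}$. Thus $\{g\in G:gA=A\}$ is contained in finitely many left $H$-cosets, and combining this with the index-at-most-two map $\mathrm{Stab}(J)\to\{A,A^*\}$ gives $[\mathrm{Stab}(J):H]<\infty$. Your closing remark that (i) ``genuinely exploits $HA=A$'' is correct in spirit, but the exploitation is via $\delta A$, not via symmetric differences of left translates.
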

\noindent Here,  
$$\Sigma_A=\{ gA \; | \; g\in G\}\cup \{gA^* \; | \; g\in G\},$$ 
is the poset of translates of $A$ partially ordered by inclusion. Two elements $B_1, B_2\in \Sigma_A$ are said to be {\it nested} if one of the intersections
$B_1\cap B_2$, $B_1\cap B^*_2$, $B^*_1\cap B_2$ or $B^*_1\cap B^*_2$ is empty. The {\it width} of $\Sigma_A$ is the cardinality of the largest set of pairwise non-nested elements of $\Sigma_A$.

\begin{remark}\label{rmk:fix} From the definition of an essential action, it is straightforward to see that $G$ acts without a global fixed point on $X_A$. In fact the converse also holds, as was later shown by Gerasimov \cite{Ger} and Niblo-Roller \cite{NibRoll}, implying that $G$ acts essentially on a CAT(0) cube complex $X$ with one orbit of hyperplanes if and only if $G$ acts on $X$ without a global fixed point. 
\end{remark}

The  complex $X_A$ is called the {\it dual cube complex} associated to the triple $(G, H, A)$. In order to define a subcomplex in $X_A$, we briefly recall its construction from \cite{S1}.  Define $V\subset \Sigma_A$  to be a vertex if it satisfies the conditions:
\begin{itemize} 
\item[-] $\forall B\in \Sigma_A$, either $B$ or $B^*$ are in $V$ but not both.
\item[-] If $B_1\in V$, $B_2\in \Sigma_A$ and $B_1\subseteq B_2$, then $B_2\in V$.
\end{itemize}
For $g\in G$, a principal vertex is defined as $V_g=\{ B\in \Sigma \; | \; g\in B\}$. Define $\G$ as the graph with the vertex set $V\in \Sigma$ where any two vertices $V, W\subset \Sigma$ are connected by an edge $e$ if and only if there is $B\in \Sigma$ so that $W=(V\smallsetminus \{B\} )\cup \{B^*\}$. In which case the edge $e$ (with the induced orientation) is said to {\it exit} $B$. 
One then restricts to the connected component  $\G'$  of the graph containing all the principal vertices. The complex $X_A$ is then defined by attaching all possible $n$-cubes to $\G'$ for each $n\geq 2$ so that $\G'$ becomes the 1-skeleton of $X_A$.

A key part of the construction is to show that any two principal vertices in $\G$ are connected by an edge path. To this end, in \cite[Lemma 3.4]{S1}, it was shown that $|V_{x_1}\smallsetminus V_{x_2}|<\infty$, for any $x_1, x_2\in G$.  Recall that the {\it interval} $[V_{x_1}, V_{x_2}]$ from $V_{x_1}$ to $V_{x_2}$ is the convex hull of $V_{x_1}$ and $V_{x_2}$ which is the full subcomplex of $X_A$ formed by all the geodesic edge paths joining $V_{x_1}$ and $V_{x_2}$.

\begin{lem}\label{lem:path} For any $g, x_1, x_2\in G$, we have
\begin{enumerate}[($i$)]
\item  Let $p$ be a geodesic edge path from $V_{x_1}$ to $V_{x_2}$. Then every edge of $p$ exits some element of $V_{x_1}\smallsetminus V_{x_2}$. 
\item $[V_{gx_1}, V_{gx_2}]=g[V_{x_1}, V_{x_2}]$.

\end{enumerate}
\end{lem}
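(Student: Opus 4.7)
The plan is to handle (i) via the standard ``no hyperplane is crossed twice by a geodesic'' property of CAT(0) cube complexes, and (ii) via the observation that the natural left $G$-action on $\Sigma_A$ is compatible with Sageev's construction of $X_A$.

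For (i), I would write the geodesic as $V_{x_1} = W_0, W_1, \dots, W_n = V_{x_2}$, and for each $i$ let $B_i \in W_i$ be the element exited by the edge $W_i \to W_{i+1}$, so that $W_{i+1} = (W_i \smallsetminus \{B_i\}) \cup \{B_i^*\}$. The key step is to show that no unordered pair $\{B, B^*\}$ is exited twice along $p$. If $B_j = B_i^*$ for some $j > i$, pick $j$ minimal with this property; then the pair $\{B_i, B_i^*\}$ agrees on $W_i$ and $W_{j+1}$, while each intermediate swap commutes with the swap of $\{B_i, B_i^*\}$. A local surgery, pushing the edge dual to $\{B_i, B_i^*\}$ through the $2$-cubes that Sageev attached to $\G'$, then yields a strictly shorter edge path between $W_i$ and $W_{j+1}$, contradicting the geodesic hypothesis. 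Granting this, each $B_i$ lies in $W_0 = V_{x_1}$ (since no earlier edge had swapped $B_i$ for $B_i^*$) and $B_i^*$ lies in $W_n = V_{x_2}$ (since no later edge swaps it back), hence $B_i \in V_{x_1} \smallsetminus V_{x_2}$.

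For (ii), the plan is to check that left multiplication by $g$ extends to a cubical automorphism of $X_A$ sending $V_x$ to $V_{gx}$, and then invoke preservation of convex hulls. The map $B \mapsto gB$ on $\Sigma_A$ preserves the partial order by inclusion and commutes with the involution $B \mapsto B^*$, so it carries vertices of $X_A$ to vertices and edges exiting $B$ to edges exiting $gB$. It restricts to the principal component containing all $V_x$ since a direct calculation gives
\[
gV_x = \{\, gB : x \in B \,\} = \{\, B' \in \Sigma_A : gx \in B' \,\} = V_{gx}.
\]
The induced cubical automorphism preserves geodesic edge paths and therefore convex hulls, so $g[V_{x_1}, V_{x_2}] = [gV_{x_1}, gV_{x_2}] = [V_{gx_1}, V_{gx_2}]$.

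The main obstacle is the no-backtracking step in part (i); however, this is a standard and essentially local argument in CAT(0) cube complexes, using only the $2$-cubes attached in Sageev's construction to realise every pair of commuting swaps as a square. Everything else in both parts amounts to unwinding the definitions of $V_g$, of edges in $\G'$, and of the $G$-action on $\Sigma_A$.
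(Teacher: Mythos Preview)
Your proposal is correct, and part (ii) matches the paper's one-line argument (the paper simply records $gV_x=V_{gx}$ and leaves the rest implicit). For part (i), however, you take a genuinely different route.

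The paper argues by counting. Any edge path from $V_{x_1}$ to $V_{x_2}$ must exit every element of $V_{x_1}\smallsetminus V_{x_2}$, so its length is at least $n:=|V_{x_1}\smallsetminus V_{x_2}|$. Following Sageev, the paper then orders $V_{x_1}\smallsetminus V_{x_2}=\{A_1,\dots,A_n\}$ so that each $A_j$ is minimal among $\{A_j,\dots,A_n\}$, and checks that successively swapping the $A_i$ for $A_i^*$ produces a path of length exactly $n$. Hence any geodesic has length $n$; since each edge exits exactly one element and the $n$ distinct elements of $V_{x_1}\smallsetminus V_{x_2}$ must all be exited, pigeonhole forces every edge of the geodesic to exit an element of $V_{x_1}\smallsetminus V_{x_2}$. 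This is entirely combinatorial and uses nothing about the attached cubes beyond the description of the $1$-skeleton.

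Your argument instead invokes the no-backtracking property of CAT(0) cube complexes: a geodesic cannot cross the same hyperplane twice, and you sketch the usual local surgery through $2$-cubes to justify this. This is perfectly valid and perhaps more conceptual from the cube-complex viewpoint, but it costs you the surgery step (which you yourself flag as the main obstacle), whereas the paper's counting argument avoids it completely and never touches the $2$-skeleton. The paper's approach also yields the explicit formula $d(V_{x_1},V_{x_2})=|V_{x_1}\smallsetminus V_{x_2}|$ as a byproduct.
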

\begin{proof} For each given edge path $p$ joining $V_{x_1}$ to $V_{x_2}$, every $B\in V_{x_1}\smallsetminus V_{x_2}$ is exited by an edge of $p$. Following the proof of Theorem 3.3 of \cite{S1}, we can order
$$V_{x_1}\smallsetminus V_{x_2} = \{A_1, \dots, A_n\},$$
 so that each $A_j$ is minimal in $\{A_j, \dots, A_n\}$ which implies that $V_{x_1}\smallsetminus \{A_i\}_{i=1}^{j}\cup \{A^*_i\}_{i=1}^{j}$ are vertices connecting $V_{x_1}$ and $V_{x_2}$ by a geodesic edge path of length $n$.  If $p$ is geodesic, i.e.~of length $n$, we conclude that it must only consist of edges that exit  elements of $V_{x_1}\smallsetminus V_{x_2}$. This gives us part (i).
 
 Part (ii) follows from the fact that $gV_x=V_{gx}$, $\forall g, x\in G$.
\end{proof}

\begin{deff}\label{Cayley} Fix  a finite symmetric generating set $S$ of $G$. For each $g\in G$ and $s\in S$, join $V_g$ and $V_{gs}$ by intervals $[V_g, V_{gs}]$ in $X_A$. We define the  subcomplex $C_A =\cup_{s\in S, g\in G} [V_g, V_{gs}]$  of $X_A$ and call it the {\it Cayley complex}  associated to the triple $(G, S, A)$. 
\end{deff}

\noindent  Define an {\it (oriented) panelling} of $C_A$  to be  the intersection of an (oriented) hyperplane of $X_A$ with $C_A$. Hence, each panelling $\mathcal J = J\cap C_A$ separates $C_A$ into disconnected components.  Note that for any hyperplane $J$ of $X_A$, we have $Stab(J)=Stab(J\cap C_A)$. Also, $G$ acts cocompactly on $C_A$ with a single orbit of panellings.

Recall that 
$$Comm_G(H)=\{ g\in G \; | \; |H:H\cap gHg^{-1}|<\infty \mbox{ and }  |gHg^{-1}:H\cap gHg^{-1}|<\infty \}.$$
$Comm_G(H)$ is a subgroup of $G$ and $H$ is said to be {\it commensurated } in $G$ if $Comm_G(H)=G$.

\begin{figure}
\includegraphics[scale=0.5]{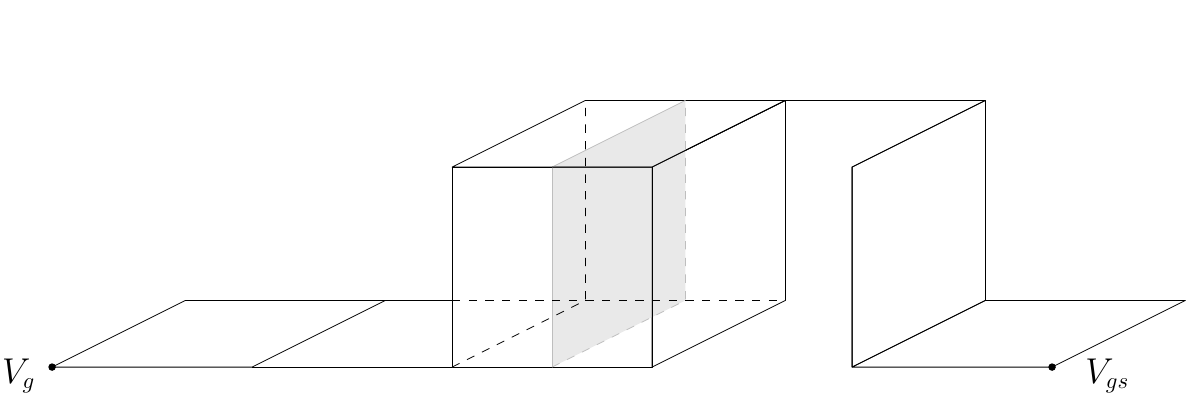}
\caption{The interval $[V_g, V_{gs}]$ and its intersection with the hyperplane $J$ (shaded).}
\label{fig1}
\end{figure} 

\begin{prop}\label{prop:comm} Suppose $H$ is a commensurated subgroup of a finitely generated group $G$. If $A$ is an $H$-proper $H$-almost invariant subset of $G$ and $HAH=A$, then the panellings of the Cayley complex $C_A\subseteq X_A$ are compact. In particular, $G$ splits nontrivially over a subgroup commensurable with $H$. 
\end{prop}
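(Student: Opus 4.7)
The plan is to show that the panelling $\mathcal{J} = J \cap C_A$ is a finite, hence compact, subcomplex of $C_A$; the splitting then follows by applying Dunwoody's theorem \cite{Dun:cut} to the cocompact action of $G$ on $C_A$. The key input, supplied by the combined hypotheses $HAH = A$ and $H$ commensurated, is that every cube of $C_A$ has finite $H$-orbit.

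First I would analyse vertex stabilisers. Because $AH = A$, the right stabiliser $\mathrm{Stab}_R(A) = \{t \in G : At = A\}$ contains $H$. Unpacking $V_g = \{B \in \Sigma_A : g \in B\}$ and applying the biconditional to each translate $B = g'A$ yields the identification
$$V_{hg} = V_g \iff g^{-1}hg \in \mathrm{Stab}_R(A).$$
Hence $\mathrm{Stab}_H(V_g) \supseteq H \cap gHg^{-1}$, which is of finite index in $H$ by the commensurated hypothesis. Applied to both endpoints of an interval this gives
$$\mathrm{Stab}_H\bigl([V_g, V_{gs}]\bigr) \supseteq H \cap gHg^{-1} \cap (gs)H(gs)^{-1},$$
still of finite index in $H$. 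Since $[V_g, V_{gs}]$ is a finite subcomplex of $X_A$ by Lemma \ref{lem:path}, this finite-index subgroup acts through a finite group of automorphisms on the interval, so the $H$-orbit of the interval and of every cube inside it is finite.

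An edge of $C_A$ dual to $J$ lies in some interval $[V_g, V_{gs}]$ whose endpoints are on opposite sides of $J$, which happens exactly when $g \in A \triangle As^{-1}$. The $H$-almost invariance of $A$ makes this symmetric difference $H$-finite, a union of finitely many right $H$-cosets; and since $H \subseteq \mathrm{Stab}(J)$ the left $H$-action permutes the intervals crossing $J$. Combined with the previous paragraph and the finiteness of $S$, this yields only finitely many intervals contributing to $\mathcal{J}$, and each is a finite convex subcomplex. Therefore $\mathcal{J}$ is a finite, compact subcomplex of $C_A$. For the splitting, $H$-properness of $A$ combined with $HAH = A$ and commensuration forces each side of $\mathcal{J}$ to contain infinitely many distinct principal vertices, so $C_A$ has more than one end. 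Because $G$ acts cocompactly on $C_A$ with a single orbit of compact separating panellings whose stabilisers contain $H$ as a finite-index subgroup, Dunwoody's theorem \cite{Dun:cut} delivers a nontrivial splitting of $G$ over a subgroup commensurable with $H$.

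The main obstacle is the translation, in the first step, of the algebraic condition $AH = A$ into the geometric rigidity statement $V_{hg} = V_g \Leftrightarrow g^{-1}hg \in \mathrm{Stab}_R(A)$. Once this is in place, the commensurator hypothesis immediately tames the $H$-action on intervals, and the remainder is bookkeeping combining $H$-almost invariance with $G$-cocompactness on $C_A$.
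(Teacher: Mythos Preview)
Your proposal is correct and follows essentially the same route as the paper's proof: identify the intervals $[V_g,V_{gs}]$ meeting $J$ via $g\in A\triangle As^{-1}$, use $H$-almost invariance to put $g$ into finitely many right cosets $HF$, use commensuration to reduce to finitely many intervals, and then invoke Dunwoody's theorem on the cocompact action on $C_A^1$.

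The only difference is cosmetic, in how the commensurated hypothesis is cashed out. The paper uses it in the form ``$HF\subseteq EH$ for some finite $E$'' and then collapses via the identity $V_{xh}=V_x$ (valid because $AH=A$) to land in finitely many intervals $[V_x,V_y]$ with $x\in E$, $y\in K$. You instead use it in the orbit--stabiliser form $[H:H\cap gHg^{-1}]<\infty$ to show each interval has finite $H$-orbit, and then observe there are finitely many $H$-orbits since $g\in HF$. These are interchangeable; your stabiliser computation $\mathrm{Stab}_H(V_g)\supseteq H\cap gHg^{-1}$ (which you correctly derive from $V_{hg}=V_g\iff g^{-1}hg\in\mathrm{Stab}_R(A)$) is exactly the pointwise version of the paper's coset-swap. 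Neither approach gains or loses anything over the other.
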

\begin{proof}  Consider the Cayley complex $C_A\subseteq X_A$. Let $J=(A, A^*)$ with stabiliser $Stab(J)$ containing $H$ as a finite index subgroup. Consider the panelling $\mathcal J=J\cap C_A$. To show that $\mathcal J$ is compact, it suffices to show that there are only finitely many intervals intersecting the hyperplane $J$. So, let $[V_g, V_{gs}]$ be an interval intersecting $J$ (see Figure \ref{fig1}). This means that either $g\in A$ and $gs\in A^*$ or $g\in A^*$ and $gs\in A$. Since $S$ is symmetric, without loss of generality, we can assume the former. But this means that $g\in A\cap A^*s^{-1}$. Since $A$ is $H$-almost invariant, the set  $A\cap A^*s^{-1}$ is $H$-finite and only depends on the choice of $s\in S$. So there exists a finite subset $F\subset G$, such that  $g\in HF$. Since $H$ is commensurated in $G$, there are finite subsets $E,K\subset G$ such that $HF\subseteq EH$ and $EHS\subseteq KH$. Since $V_{x}=V_{xh}=xhV_{1}$, $\forall h\in H$, $\forall x\in G$, we get 
$$\cup_{s\in S, g\in A\cap A^*s^{-1}} [V_g, V_{gs}]\subseteq \cup_{s\in S, g\in EH}[V_g, V_{gs}] \subseteq \cup_{x\in E, y\in K}[V_x, V_y],$$
which shows that there are finitely many such intervals. 

Now, consider the $1$-skeleton $C_A^1$ of $C_A$. The edges of each hyperplane $\mathcal J$ of $C_A$ form a finite cut of $C_A^1$. It follows that $C_A^1$ has more than one end and $G$ acts on $C_A^1$ with unbounded orbits. We can apply Theorem 4.1 of Dunwoody \cite{Dun:cut}, to obtain a nontrivial splitting over a subgroup which is a finite extension of $Stab(\mathcal J)=Stab(J)$.\end{proof}

\begin{lem}[{\cite[Step 1]{DunRoll}}]\label{lem:binv} Let $H$ be a subgroup of a finitely generated group $G$ and $A$ be an $H$-proper $H$-almost invariant subset of $G$ satisfying $AH=A$. Then there exists a subgroup $L\leq H$ and a subset $B\subset G$ so that $LBH=B$ and $B$ is $L$-proper $L$-almost invariant. Moreover, if $H$ is commensurated in $G$, then $[H:L]<\infty$.
\end{lem}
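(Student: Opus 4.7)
My plan is to symmetrize $A$ with respect to left multiplication by elements of $H$ so as to obtain a set $B$ that is genuinely left invariant under some subgroup $L \leq H$. Since $AH=A$, any Boolean combination of left translates $hA$ is automatically right $H$-invariant, so the real task is to achieve left invariance by $L$ while preserving $L$-properness and $L$-almost invariance.

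The first step is to introduce the subgroup
$$L_0 = \{h \in H : hA + A \text{ is } H\text{-finite}\},$$
which should be thought of as the stabilizer of the class $[A]$ in the Boolean algebra of subsets of $G$ modulo $H$-finite symmetric difference, under the left $H$-action. That $L_0$ is a subgroup follows from the identity $h_1h_2 A + A = h_1(h_2A + A) + (h_1A + A)$ together with the fact that left multiplication by any element of $G$ preserves $H$-finiteness, since it simply permutes the right $H$-cosets. The second step is to construct $B$ as a suitable Boolean combination of left translates $hA$ for $h$ ranging over an appropriate subset of $H$, arranged so that $L_0 B H = B$, $B$ is $L_0$-proper, and $B$ is $L_0$-almost invariant, and then take $L=L_0$ (or a suitable finite-index subgroup thereof).

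For the commensurated case, I would argue $[H:L]<\infty$ by exploiting the fact that commensurability gives $[H : H \cap xHx^{-1}] < \infty$ for every $x \in G$, so $H$ acts on the right coset space $G/H$ with finite orbits (the stabilizer of $xH$ being $H \cap xHx^{-1}$). Combined with the right $H$-invariance of each $hA$ and the $H$-almost invariance of $A$, this should force the $H$-orbit of $[A]$ in the Boolean algebra modulo finite symmetric difference to be finite, so the stabilizer $L_0$ has finite index in $H$.

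The main obstacle is the second step: ensuring $B$ is simultaneously $L_0$-proper and $L_0$-almost invariant. Naive symmetrizations such as $\bigcap_{h \in L_0} hA$ or $\bigcup_{h \in L_0} hA$ may collapse to trivial or co-trivial sets, and even for well-chosen $B$ one must upgrade the $H$-finiteness of each defect $hA + A$ (for $h \in L_0$) to the $L_0$-finiteness of the relevant symmetric differences $B + Bg$. This is nontrivial when $[H:L_0]$ is infinite, since an $H$-coset then contains infinitely many $L_0$-cosets and $H$-finite sets are no longer automatically $L_0$-finite. The concrete Boolean recipe that resolves this tension is what I would extract from Dunwoody and Roller's Step~1.
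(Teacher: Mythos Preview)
The paper does not actually prove this lemma: it is stated with attribution to \cite[Step~1]{DunRoll} and no argument is given. So there is nothing in the paper to compare your proposal against beyond the bare citation.

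Your outline is consistent with that citation in that it, too, ultimately defers the crucial step to Dunwoody--Roller. The framework you set up is sound: the subgroup $L_0=\{h\in H: hA+A\text{ is }H\text{-finite}\}$ is the natural candidate, your verification that it is a subgroup is correct, and your orbit-stabiliser argument for the commensurated case (that $H$ acts on $G/H$ with finite orbits because $\operatorname{Stab}_H(xH)=H\cap xHx^{-1}$ has finite index) is the right ingredient. However, you explicitly acknowledge that you do not supply the ``concrete Boolean recipe'' producing $B$ with $L_0BH=B$ that is simultaneously $L_0$-proper and $L_0$-almost invariant, and you correctly flag that this is the nontrivial point, especially when $[H:L_0]=\infty$ so that $H$-finiteness does not descend to $L_0$-finiteness. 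In short, your proposal is a faithful outline of the shape of the Dunwoody--Roller argument rather than an independent proof; since the paper itself only cites that argument, you and the paper are on equal footing here, but neither constitutes a self-contained proof.

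One small point to tighten in your commensurated-case sketch: the passage from ``$H$ acts on $G/H$ with finite orbits'' to ``the $H$-orbit of $[A]$ modulo $H$-finite sets is finite'' is not immediate from what you wrote; it uses that each $hA+A$, being right $H$-invariant, is (when $H$-finite) a finite union of double cosets $HgH$, each of which meets only finitely many left $H$-cosets in the commensurated case---so the defects are genuinely finite in $G/H$ and the orbit map factors through a finite set. You gesture at this but do not make it explicit.
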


\begin{thm}\label{thm:comm} Let $H$ be a commensurated subgroup of a finitely generate group $G$. If there is an $H$-proper $H$-almost invariant subset $A$ such that $AH=A$, then $G$  splits nontrivially over a subgroup commensurable with  $H$.\end{thm}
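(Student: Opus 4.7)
The plan is to reduce directly to Proposition \ref{prop:comm} via Lemma \ref{lem:binv}. Proposition \ref{prop:comm} needs the two-sided invariance condition $HAH = A$, whereas our hypothesis only gives the one-sided $AH = A$; Lemma \ref{lem:binv} is tailor-made to close exactly this gap.

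First I would apply Lemma \ref{lem:binv} to $(G, H, A)$. Since $H$ is commensurated in $G$, the lemma produces a subgroup $L \leq H$ of finite index together with a subset $B \subseteq G$ satisfying $LBH = B$ with $B$ both $L$-proper and $L$-almost invariant. The identity $LBH = B$ unpacks as $LB = B$ and $BH = B$, and since $L \leq H$ we then have $BL \subseteq BH = B$ while the reverse inclusion is trivial, so $BL = B$ and hence $LBL = B$. I would also note that $L$ inherits the commensurated property from $H$: for any $g \in G$, the subgroups $H \cap gHg^{-1}$, $L$, and $gLg^{-1}$ all have finite index in $H$ (respectively in $gHg^{-1}$), so the common intersection $L \cap gLg^{-1}$ has finite index in both $L$ and $gLg^{-1}$, proving that $L$ is commensurated in $G$.

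Finally I would apply Proposition \ref{prop:comm} to the triple $(G, L, B)$. All its hypotheses now hold, so $G$ splits nontrivially over a subgroup commensurable with $L$; since $[H:L] < \infty$, that subgroup is also commensurable with $H$, which completes the proof. I do not foresee any real obstacle, as the content of the theorem is split cleanly between the two preceding results; the only routine checks are the inheritance of the commensurated property by $L$ and the passage from $LBH = B$ to $LBL = B$ using $L \leq H$.
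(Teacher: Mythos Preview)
Your proposal is correct and follows exactly the paper's approach: apply Lemma~\ref{lem:binv} to obtain a finite-index subgroup $L\le H$ and a bi-invariant set $B$, then apply Proposition~\ref{prop:comm}. The additional verifications you include (that $LBH=B$ yields $LBL=B$ via $L\le H$, and that $L$ inherits the commensurated property from $H$) are precisely the routine checks the paper leaves implicit.
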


\begin{proof} By Lemma \ref{lem:binv}, let $L$ be a finite index subgroup of $H$ and $B$ be an $L$-proper $L$-almost invariant subset such that $LBL=B$. Then apply Proposition \ref{prop:comm}.
\end{proof}

\section{Subgroups of finite splitting height}\label{sec:fch}

For a subgroup $H$ of a finitely generated group $G$ consider the natural action of $G$ on the set of  left cosets $G/H$.  Let $X\subseteq G/H$. Define $Stab(X)$ as the subgroup of $G$ that leaves $X$ invariant and let $H_X$ denote its pointwise stabiliser. Note that $H_X=\cap_{xH\in X} xH{x^{-1}}$, $\forall x\in G$. The following characterisation of finite height is straightforward.

\begin{lem}\label{lem:h}  Let $H$ be a subgroup of $G$. Then  $height_G(H)=n<\infty$ if and only if $n+1$ is the least integer such that for any finite  subset $X\subseteq G/H$ of cardinality at least $n+1$, the pointwise stabiliser $H_X$ is finite.
\end{lem}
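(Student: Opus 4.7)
The statement is a direct set-theoretic reformulation, and the plan is to prove it by unpacking both sides and using the monotonicity of the assignment $X \mapsto H_X$. The single observation driving everything is: if $X' \subseteq X \subseteq G/H$, then $H_X = \bigcap_{xH \in X} H^x \subseteq \bigcap_{xH \in X'} H^x = H_{X'}$, since demanding that an element fix more cosets is a stronger condition. Consequently, finiteness of $H_{X'}$ for some $X' \subseteq X$ forces finiteness of $H_X$.

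First I would handle the forward direction. Assume $\mathrm{height}_G(H)=n<\infty$, so $n$ is the least integer such that $\bigcap_{i=1}^{n+1} g_i H g_i^{-1}$ is finite for every choice of $n+1$ distinct cosets. Writing $X=\{g_1H,\ldots,g_{n+1}H\}$ this intersection equals $H_X$, so $H_X$ is finite whenever $|X|=n+1$. For a general $X \subseteq G/H$ with $|X|\geq n+1$, pick any size-$(n+1)$ subset $X'\subseteq X$; by monotonicity $H_X\subseteq H_{X'}$, and the latter is finite, so $H_X$ is finite. This gives property~(a) for the threshold $n+1$. For minimality, the definition of height supplies $n$ distinct cosets whose intersection of conjugates is infinite, i.e.\ some $X_0 \subseteq G/H$ with $|X_0|=n$ and $H_{X_0}$ infinite; this rules out $n$ as a valid threshold, so $n+1$ is the least.

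The backward direction is symmetric. Assume $n+1$ is the least integer such that $H_X$ is finite for all $X\subseteq G/H$ with $|X|\geq n+1$. Specializing to $|X|=n+1$ shows that every intersection of $n+1$ conjugates of $H$ indexed by distinct cosets is finite, so $\mathrm{height}_G(H)\leq n$. For the equality, the leastness assumption yields some $X\subseteq G/H$ with $|X|\geq n$ and $H_X$ infinite; choosing any $X''\subseteq X$ with $|X''|=n$ gives $H_{X''}\supseteq H_X$, which is therefore infinite, exhibiting $n$ distinct cosets whose intersection of conjugates is infinite. Hence $\mathrm{height}_G(H)=n$.

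There is no real obstacle here: the whole argument is the observation that $X\mapsto H_X$ is inclusion-reversing, combined with the trivial fact that finiteness is inherited by subgroups. I would keep the write-up to a short paragraph, making the monotonicity of $H_X$ explicit and then translating between the two equivalent extremal formulations.
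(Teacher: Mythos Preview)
Your argument is correct; the paper itself does not give a proof, merely calling the characterisation ``straightforward'', and what you have written is exactly the kind of monotonicity argument one has in mind when reading that word. There is nothing to compare against: your write-up is the natural unpacking of the definitions, and could be shortened as you suggest.
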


For our purposes, it is convenient to generalise the finite height property as follows.

\begin{deff} 
 Let $H$ be a subgroup of  a finitely generated group $G$. The splitting height of $H$ in $G$, denoted $s$-$height_G(H)$, is the least integer $n$ such that for any $k>n$ distinct cosets $g_iH$, the intersection $\cap_{i=1}^{k} g_i H{g_i^{-1}}$ is splitting-compatible in $G$. If such  $n$ exists, then $H$ is said to have {\it finite splitting height} equal to $n$ in $G$. 
\end{deff} 
The following reformulation of the splitting height will be important in the proof of our main result.

\begin{lem}\label{lem:sh}  Let $H$ be a subgroup of $G$. Then  $s$-$height_G(H)=n<\infty$ if and only if $n$ is the least integer such that for any finite subset $X\subseteq G/H$ of cardinality at least $n+1$, the pointwise stabiliser $H_X$ is splitting-compatible.
\end{lem}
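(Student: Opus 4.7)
The plan is a direct verification: match up the two formulations via the natural correspondence between finite subsets of $G/H$ and finite collections of distinct cosets $g_iH$. The only content of the lemma is to repackage the definition in a form that parallels Lemma \ref{lem:h}, so the argument is largely bookkeeping.

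First, observe that given any finite subset $X=\{g_1H,\dots,g_kH\}\subseteq G/H$, the pointwise stabiliser of $X$ under the $G$-action on $G/H$ by left translation is
\[
H_X \;=\; \bigcap_{i=1}^{k} g_i H g_i^{-1} \;=\; \bigcap_{i=1}^{k} H^{g_i},
\]
and this intersection is independent of the choice of representatives $g_i$ for the cosets $g_iH$. Conversely, every collection of $k$ distinct cosets $g_1H,\dots,g_kH$ arises as such an $X$ with $|X|=k$.

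Next, write $R(m)$ for the condition: \emph{for every finite $X\subseteq G/H$ with $|X|\geq m$, the subgroup $H_X$ is splitting-compatible in $G$}. Since any $X$ with $|X|\geq m+1$ also satisfies $|X|\geq m$, one has $R(m)\Rightarrow R(m+1)$, so $R$ is monotone in $m$. Unpacking the definition of the splitting height, $s$-$height_G(H)\leq n$ says precisely that for every $k>n$ (equivalently $k\geq n+1$) and every choice of $k$ distinct cosets $g_iH$, the intersection $\bigcap_{i=1}^{k} H^{g_i}$ is splitting-compatible. By the correspondence above this is exactly $R(n+1)$. Hence $s$-$height_G(H)=n$ iff $n$ is the least integer with $R(n+1)$ holding, which is exactly the reformulation in the lemma.

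There is no genuine obstacle: the argument is a formal reformulation that uses only the definition of the pointwise stabiliser of a subset of $G/H$ and the monotonicity of $R$. The statement is deliberately modelled on Lemma \ref{lem:h}, and the same bijection (between unordered finite subsets of $G/H$ and finite collections of distinct cosets) furnishes both proofs.
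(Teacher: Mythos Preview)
Your proof is correct and follows essentially the same approach as the paper's own proof: both simply unwind the definition by identifying a finite subset $X=\{g_1H,\dots,g_kH\}\subseteq G/H$ with a collection of $k$ distinct cosets and noting that $H_X=\bigcap_i H^{g_i}$. Your version is a bit more explicit (you spell out the monotonicity of the condition $R(m)$ and the independence of $H_X$ from coset representatives), but there is no genuine difference in strategy.
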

\begin{proof}The splitting height satisfies $s$-$height_G(H)=n<\infty$ if and only if for any $k>n$ distinct cosets $g_iH$, the intersection $H_X=\cap_{i=1}^{k} g_iH{g_i^{-1}}$ is splitting-compatible in $G$ where $X=\{g_1H, \dots, g_kH\}\subseteq G/H$. The latter condition is equivalent to the pointwise stabiliser $H_X$ of $X$ being splitting-compatible whenever $n< |X|<\infty$.
\end{proof}

The next result allows one to construct new examples of subgroups with finite splitting height.

\begin{prop}\label{prop:ext}  Given a short exact sequence of groups $1\to N\to G\xrightarrow{\pi} Q$, suppose $H$ is a subgroup of $Q$ of finite height $n$. Then $\overline{H}=\pi^{-1}(H)$ has splitting height at most $n$.
\end{prop}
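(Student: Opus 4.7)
The plan is to reduce the required splitting-compatibility of intersections $L = \cap_{i=1}^k \overline{H}^{g_i}$ (for $k>n$ distinct cosets $g_i\overline{H}$) to Theorem \ref{thm:comm} by verifying that $L$ is always commensurated in $G$. The overall strategy is to push everything down to $Q$ via $\pi$, use finite height of $H$ in $Q$ to obtain a finite subgroup on the quotient side, and then pull back to a finite extension of $N$ in $G$.

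First, since $N = \ker(\pi) \leq \overline{H}$, two cosets $g\overline{H}$ and $g'\overline{H}$ in $G$ coincide if and only if $\pi(g)H = \pi(g')H$ in $Q$. So the $k$ distinct cosets $g_i\overline{H}$ project to $k$ distinct cosets $\pi(g_i)H$. Next, a direct calculation using normality of $N$ shows that $x \in \overline{H}^{g_i}$ if and only if $\pi(x) \in H^{\pi(g_i)}$, hence
\[
L \;=\; \pi^{-1}\!\Bigl(\bigcap_{i=1}^k H^{\pi(g_i)}\Bigr).
\]
By the hypothesis that $H$ has height $n$ in $Q$ and that the $\pi(g_i)H$ are $k > n$ distinct cosets, the subgroup $F := \bigcap_{i=1}^k H^{\pi(g_i)} \leq Q$ is finite, so $L$ is a finite extension of $N$ with $L/N = F$.

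The second step is to check that $L$ is commensurated in $G$. For any $g \in G$, writing $q = \pi(g)$, one has $L^g = \pi^{-1}(F^q)$ and hence
\[
L \cap L^g \;=\; \pi^{-1}(F \cap F^q),
\]
so $[L : L \cap L^g] = [F : F \cap F^q]$ and $[L^g : L \cap L^g] = [F^q : F \cap F^q]$, both finite because $F$ is finite. Thus $\mathrm{Comm}_G(L) = G$. Applying Theorem \ref{thm:comm}, any $L$-proper $L$-almost invariant subset $A$ of $G$ with $LAL=A$ (which in particular satisfies $AL=A$) yields a nontrivial splitting of $G$ over a subgroup commensurable with $L$. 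Hence $L$ is splitting-compatible, and by the definition of splitting height this shows $s\text{-}height_G(\overline{H}) \leq n$.

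There is no real obstacle in this argument: the short exact sequence forces $\overline{H}$ to contain $N$, which converts the height condition on $H$ in $Q$ directly into a finite-index-over-$N$ statement for intersections of conjugates of $\overline{H}$ in $G$, and Theorem \ref{thm:comm} then does the rest. If a subtlety could arise, it would be in the descent identification $L = \pi^{-1}(F)$, but this is immediate from $N \leq L$ together with the equality $\pi(\overline{H}^{g_i}) = H^{\pi(g_i)}$.
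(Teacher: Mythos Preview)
Your proof is correct and follows essentially the same route as the paper's: project distinct cosets to $Q$, use finite height to force $F=\bigcap H^{\pi(g_i)}$ to be finite, deduce that $L=\pi^{-1}(F)$ is a finite extension of $N$ and hence commensurated in $G$, and conclude splitting-compatibility. The only cosmetic difference is that the paper cites Proposition~\ref{prop:comm} directly (whose hypothesis $LAL=A$ matches the definition of splitting-compatible verbatim), whereas you invoke the slightly stronger Theorem~\ref{thm:comm}; either works here.
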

\begin{proof} Note that the natural bijection between $G/{\overline{H}}$ and $Q/H$ is equivariant with respect to the projection $\pi:G\to Q$. For any finite $X\subseteq Q/H$ of cardinality at least $n+1$, the pointwise stabiliser $H_X\subseteq Q$ is finite by Lemma \ref{lem:h}. This implies that for any finite $X\subseteq G/{\overline{H}}$ of cardinality at least $n+1$, the pointwise stabiliser $\overline{H}_X\subseteq G$ is a finite extension of $N$ and hence commensurable with $N$. This shows that $\overline{H}_X$ is commensurated in $G$ and hence is splitting-compatible by Proposition \ref{prop:comm}.\end{proof}

Let $G$ be a finitely generated group. Subgroups of $G$ of finite height and hence strongly quasiconvex  subgroups are  examples of groups with finite splitting height. 
The next example comes from applying Proposition \ref{prop:ext} to the class of  convex cocompact subgroups in mapping class groups.

\begin{example} Let ${\rm Mod}(S)$ be the mapping class group of a closed oriented surface $S$ of genus at least 2. As it is pointed out in Corollary 1.2 of \cite{AMST}, a convex cocompact subgroup in ${\rm Mod}(S)$  is stable and hence has finite height. A {\it Schottky subgroup} in ${\rm Mod}(S)$ is a convex cocompact subgroup which is free of finite rank. Let $F$ be a finite rank free subgroup of ${\rm Mod}(S)$. By a theorem of Farb and Mosher \cite{FaMo}, $\pi_1(S)\rtimes F$ is hyperbolic if and only if $F$ is a Schottky subgroup. The group $\pi_1(S)\rtimes F$ naturally sits in  ${\rm Mod}(S, p)$ - the mapping class group of $S$ punctured at the base point $p$, which fits into a short exact sequence 
$$1\to \pi_1(S)\to {\rm Mod}(S, p)\xrightarrow{q} {\rm Mod}(S)\to 1.$$
Applying Proposition \ref{prop:ext}, it follows that if  $\pi_1(S)\rtimes F$ is  hyperbolic, then it has finite splitting height in ${\rm Mod}(S, p)$.  More generally, if $C$ is a convex compact subgroup of ${\rm Mod}(S)$, then $q^{-1}(C)$ has finite splitting height in ${\rm Mod}(S, p)$.
\end{example}

\begin{example}\label{ex:rel} Let $G$ be a relatively hyperbolic group.   Recall that a subgroup of $G$ is called {\it parabolic} if it can be conjugated into a peripheral subgroup of $G$. 

Dahmani defined a subgroup of $H$ of  $G$ to be {\it quasiconvex relative} to the peripheral subgroups if the action of $G$ as a geometrically finite convergence group on its Bowditch boundary restricts to a geometrically finite convergence group action of $H$ on its limit set \cite{Dah}, \cite[\S 6]{HW2}.

By Theorem 1.4 of \cite{HW1}, if $H$ is relatively quasiconvex in $G$, then there exists an integer $n>0$ so that for any $k>n$ and distinct cosets $g_iH$, the intersection $\cap_{i=1}^k g_iH{g_i^{-1}}$  is either finite or parabolic. So, if every parabolic subgroup of $G$ is splitting-compatible, for example when peripheral subgroups are virtually polycyclic, this gives us that $H$ has finite splitting height at most $n$ in $G$. 
\end{example}

\section{Finite splitting height implies splitting-compatible}\label{sec:main}

Let $H$ be a subgroup of  $G$.  We will show that if $H$ has  finite splitting height in $G$, then  it is splitting-compatible in $G$. 

We recall (see \cite[Definition 1]{Niblo}) that for an $H$-proper $H$-almost invariant subset $A\subset G$ satisfying $HA=A$, the {\it splitting obstruction} of the triple $(G, H, A)$ is defined as
$$S_A(G, H)=\{g\in G \; |\; A\cap gA\ne \emptyset, A\cap gA^*\ne \emptyset, A^*\cap gA\ne \emptyset, A^*\cap gA^*\ne \emptyset\}.$$
So  $g\in S_A(G, H)$ if and only if  $gA$ is not nested with $A$. 
Given two hyperplanes $J=(A, A^*)$ and $gJ=(gA, gA^*)$ of $X_A$, it follows that $J$ and $gJ$ are transverse if and only if $g \in S_A(G, H)$. 

Clearly $S_A(G, H)H=S_A(G, H)$. Denote by $\pi_H: G\to G/H$ the quotient map of the right action of $H$ on $G$.

\begin{lem}\label{lem:compact} Let $H$ be a subgroup of a finitely generated group $G$ and $A$ be an $H$-proper $H$-almost invariant subset of $G$ satisfying $HA=A$. If $\pi_H(S_A(G, H))$ is  finite, then  $G$ splits nontrivially over a subgroup commensurable with $H$. 
\end{lem}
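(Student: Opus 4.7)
The plan is to follow the same template as the proof of Proposition \ref{prop:comm}, transported to the weaker hypothesis that only $HA = A$ holds and $\pi_H(S_A(G, H))$ is finite (rather than $HAH = A$ and $H$ commensurated in $G$).

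First, I would translate the hypothesis into geometry. Since $\pi_H(S_A(G,H))$ is finite and $HA = A$, writing $\pi_H(S_A(G,H)) = \{g_1H, \ldots, g_nH\}$ the hyperplane $J = (A, A^*)$ of $X_A$ is transverse to only finitely many other hyperplanes $g_1J, \ldots, g_nJ$ by Theorem \ref{thm:Sageev}(iii). In particular, the width of $\Sigma_A$ is at most $n+1$ and $X_A$ is finite-dimensional by Theorem \ref{thm:Sageev}(iv).

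Second, I would examine the panelling $\mathcal J = J \cap C_A$. The computation in Proposition \ref{prop:comm} shows that the intervals $[V_g, V_{gs}]$ crossing $J$ have $g$ in the $H$-finite set $(A \cap A^*s^{-1}) \cup (A^* \cap As^{-1}) \subseteq HF$ for some finite $F \subseteq G$. Using the left-equivariance $V_{hg} = hV_g$, which holds under $HA = A$, these intervals form finitely many $H$-orbits, so the quotient $\mathcal J / H$ is compact. Unlike the situation in Proposition \ref{prop:comm}, however, $\mathcal J$ itself need not be compact because the right-equivariance $V_{gh} = V_g$ would require $AH = A$, which is unavailable here.

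The decisive step is to convert this $H$-cocompact cut into a genuine splitting of $G$. Let $K$ be the finite-index subgroup of $Stab(J)$ that fixes each $g_iJ$ setwise, which exists because $Stab(J)$ permutes the finite set $\{g_1J, \ldots, g_nJ\}$; then $K$ is commensurable with $H$. Using $K$ together with the finite $H$-orbit structure on the intervals crossing $J$, I expect one can exhibit a finite cut in the cocompact $G$-graph $C_A^1$ whose stabilizer is a finite extension of $K$, and then apply Dunwoody's cuts theorem \cite{Dun:cut} exactly as in Proposition \ref{prop:comm} to produce a splitting over a subgroup commensurable with $H$. The main obstacle is precisely this final reduction -- converting the $H$-cocompact panelling into a genuine finite cut of a cocompact $G$-graph -- which I expect to follow the general ``controlled splitting obstruction'' argument of Niblo \cite{Niblo}, designed for exactly this situation.
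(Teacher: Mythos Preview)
Your initial geometric translation is exactly right: finiteness of $\pi_H(S_A(G,H))$ means that only finitely many hyperplanes of $X_A$ are transverse to $J = (A, A^*)$. But from here you take an unnecessary detour through the Cayley complex $C_A$, and the gap you yourself flag at the end is real: nothing in the argument as written produces a genuine finite cut, and invoking Niblo's machinery here is both vague and circular (his result needs $H$ finitely generated, which you do not have).

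The paper's proof is shorter and bypasses $C_A$ altogether. The observation you are missing is that \emph{the hyperplane $J$ itself is compact}. Recall that $J$ is a CAT(0) cube complex in its own right, and its hyperplanes are precisely the intersections $J \cap gJ$ for those $g \in G$ with $gJ$ transverse to $J$. Since there are only finitely many such $gJ$, the cube complex $J$ has only finitely many hyperplanes; but a CAT(0) cube complex with $n$ hyperplanes has at most $2^n$ vertices, so $J$ (and its carrier) is finite. The paper phrases this via the carrier: every edge of the carrier of $J$ exits some $B \in \Sigma_A$ \cite[Lemma~3.9]{S1}, hence determines a hyperplane meeting $J$, and there are only finitely many of these.

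Once $J$ is compact, its edges form a \emph{finite} cut of the $1$-skeleton $X_A^{1}$, on which $G$ acts essentially and hence with unbounded orbits; Dunwoody's theorem \cite{Dun:cut} then applies directly, exactly as in the last paragraph of Proposition~\ref{prop:comm}, to give a nontrivial splitting over a finite extension of $Stab(J)$, which is commensurable with $H$. No right-invariance $AH = A$, no Cayley subcomplex, and no analysis of $H$-orbits of intervals are needed. Note incidentally that since $J$ is compact, so is $\mathcal J = J \cap C_A$; your worry that $\mathcal J$ is merely $H$-cocompact was unfounded.
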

\begin{proof} Consider the dual cube complex $X_A$ and the hyperplane $J=(A, A^*)$. Since  $\pi_H(S_A(G, H))$  is  finite, there can only be finitely many hyperplanes that are transverse to $J$. Since every edge $e$ of the carrier of $J$ exits some $B\in \Sigma_A$ \cite[Lemma 3.9]{S1}, it follows that every such edge defines a unique  hyperplane $J_e$ that intersects $J$ (see Figure \ref{fig2}). But since there are finitely many hyperplanes that are transverse to $J$, this implies that the carrier of $J$ and hence $J$ are compact.  Then arguing as in the proof of Proposition \ref{prop:comm} by considering the 1-skeleton of $X_A$, we obtain an action with unbounded orbits on a graph with more than one end. The result now follows from Theorem 4.1 of \cite{Dun:cut}.
\end{proof}
\begin{figure}
\includegraphics[scale=0.5]{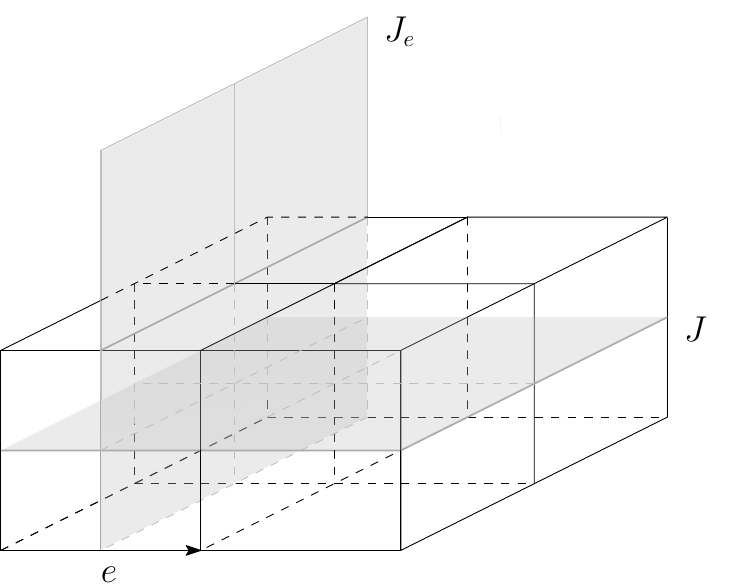}
\caption{The hyperplane $J$, its carrier and the edge $e$ defining the hyperplane $J_e$. Both hyperplanes are shaded.}
\label{fig2}
\end{figure} 
\begin{remark} In \cite[Theorem B]{Niblo}, Niblo showed that if $H$ is finitely generated and  $\langle S_A(G, H)\rangle \leq Comm_G(H)$, then $G$ splits nontrivially over a subgroup commensurable with $H$.  There, the finite generation of $H$ is used to ensure that the coboundary of $A$ is connected in a suitable Cayley graph of $G$. This is a key step in proving that the hyperplanes of the dual cube complex $X_A$ are compact \cite[Proposition 7]{Niblo}. In view of Proposition \ref{prop:comm}, we pose the following question.
\end{remark}

\begin{ques} Suppose $H$ is a  subgroup of a finitely generated group $G$ and $A$ is $H$-proper $H$-almost invariant subset satisfying $HAH=A$. Suppose  $\langle S_A(G, H)\rangle \leq Comm_G(H)$. Are the panellings of the Cayley complex $C_A$ compact?
\end{ques}

In what follows, we use Kropholler's corner argument \cite[Lemma 4.17]{Krop} to deal with the splitting obstruction when passing to intersections of conjugates of $H$.

\begin{lem}[Kropholler's corner]\label{lem:corner} Suppose $A$ and $B$ are $H$-proper $H$-almost invariant subsets of $G$  such that $AH=A$, $BH=B$. If $g\in A^*$ and $g^{-1}\in B^*$, then  $A\cap gB$ is $H\cap gHg^{-1}$-almost-invariant. 
\end{lem}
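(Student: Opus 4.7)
Set $C:=A\cap gB$, and fix $k\in G$. The goal is to show $C+Ck$ is $(H\cap H^g)$-finite; since $C$ is right-$(H\cap H^g)$-invariant (being a right-$H$-invariant set), this yields $(H\cap H^g)$-almost invariance. The starting tool is the elementary set-theoretic inclusion, valid for any four subsets,
\[
(A\cap gB)+(Ak\cap gBk)\;\subseteq\;(A+Ak)\cup(gB+gBk)\;=\;(A+Ak)\cup g(B+Bk),
\]
which follows from $(X\cap Y)+(X'\cap Y')\subseteq(X+X')\cup(Y+Y')$ and $gB+gBk=g(B+Bk)$. The $H$-almost invariance of $A$ and $B$ makes $A+Ak$ and $B+Bk$ both $H$-finite, and left translation by $g$ takes left $H$-cosets to left $H$-cosets, so $g(B+Bk)$ is $H$-finite as well. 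Hence $C+Ck$ is $H$-finite.

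To upgrade $H$-finiteness to $(H\cap H^g)$-finiteness I rely on an elementary coset-geometry observation: if $xH\cap yH^g\neq\emptyset$ and $z$ lies in the intersection, then for any other $w\in xH\cap yH^g$ one has $z^{-1}w\in H\cap H^g$, so $xH\cap yH^g\subseteq z(H\cap H^g)$. Consequently, any subset of $G$ that is both $H$-finite and $H^g$-finite is automatically $(H\cap H^g)$-finite. It therefore suffices to show that $C+Ck$ is also $H^g$-finite. For this I exploit the symmetric rewriting $A\cap gB=g(g^{-1}A\cap B)$. The sets $g^{-1}A$ and $B$ are both right-$H$-invariant and $H$-almost invariant, so applying the first step to the pair $(g^{-1}A,B)$ yields that $(g^{-1}A\cap B)+(g^{-1}A\cap B)k$ is $H$-finite. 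Then the identity
\[
C+Ck\;=\;g\bigl[(g^{-1}A\cap B)+(g^{-1}A\cap B)k\bigr],
\]
combined with the corner hypotheses $g\in A^*$ and $g^{-1}\in B^*$ (equivalently, $gH\cap A=\emptyset$ and $H\cap gB=\emptyset$), is used to translate $H$-finiteness on the ``$g^{-1}$-side'' into $H^g$-finiteness on the ``$g$-side'', by tracking how left multiplication by $g$ interacts with the coset partitions.

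The main obstacle is the last step: controlling the coset structure rigorously so that the $g$-translate of an $H$-finite set lands in finitely many cosets of $H^g$ (not merely finitely many cosets of $H$). The corner conditions $g\in A^*$ and $g^{-1}\in B^*$ are essential here---they ensure that the boundary of $C$ avoids the cosets $H$ and $gH$, so the only $H$-cosets contributing to the symmetric difference are those whose image under $g$-conjugation is simultaneously controlled by $H^g$. Once both finitenesses are established, combining them through the coset-intersection lemma above gives $(H\cap H^g)$-finiteness of $C+Ck$, proving that $A\cap gB$ is $(H\cap H^g)$-almost invariant.
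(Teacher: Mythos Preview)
The paper does not prove this lemma; it simply cites \cite[Lemma~4.17]{Krop}. So the question is whether your argument stands on its own. Your overall strategy is the right one: show that $C+Ck$ is simultaneously $H$-finite and $H^g$-finite, then invoke the (correct) observation that a set which is both $H$-finite and $H^g$-finite is automatically $(H\cap H^g)$-finite. The inclusion $C+Ck\subseteq(A+Ak)\cup g(B+Bk)$ is also fine. But two of your intermediate claims are false, and the argument never recovers.

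First, ``left translation by $g$ takes left $H$-cosets to left $H$-cosets, so $g(B+Bk)$ is $H$-finite'' is wrong. In this paper $H$-finite means contained in finitely many cosets $Hx$; left translation sends $Hx$ to $gHx=H^g(gx)$, which is a coset of $H^g$, not of $H$. Thus $g(B+Bk)$ is $H^g$-finite, \emph{not} $H$-finite, and you have not established $H$-finiteness of $C+Ck$. Second, $g^{-1}A$ is \emph{not} $H$-almost invariant: $(g^{-1}A)+(g^{-1}A)k=g^{-1}(A+Ak)$ is $H^{g^{-1}}$-finite, not $H$-finite. Your final paragraph acknowledges a difficulty but does not resolve it; the claim that the corner conditions make the boundary ``avoid the cosets $H$ and $gH$'' is not the actual mechanism.

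Here is the step you are missing. From $g^{-1}\in B^*$ and $BH=B$ one gets $H\cap gB=\emptyset$. Fix a single coset $Hf$. Since $gB$ is $H^g$-almost invariant, $gB+gBf$ is $H^g$-finite, so $gB\subseteq gBf\cup H^gE_f$ with $E_f$ finite; hence
\[
Hf\cap gB\ \subseteq\ (Hf\cap gBf)\,\cup\,H^gE_f\ =\ (H\cap gB)f\,\cup\,H^gE_f\ =\ H^gE_f,
\]
which is $H^g$-finite (and likewise for $Hf\cap gBk$). Since $C+Ck\subseteq gB\cup gBk$, the portion of $C+Ck$ lying in $A+Ak\subseteq HF_1$ is therefore both $H$-finite and $H^g$-finite. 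The symmetric computation, using $gH\cap A=\emptyset$ (from $g\in A^*$ and $AH=A$) together with the $H$-almost invariance of $A$, shows that each coset $H^gf'$ meets $A\cup Ak$ in an $H$-finite set, handling the portion of $C+Ck$ lying in $g(B+Bk)$. This is the genuine content of Kropholler's corner argument, and it is exactly where the hypotheses $g\in A^*$ and $g^{-1}\in B^*$ enter.
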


Next proposition is a reformulation of Theorem \ref{thmA}. 

\begin{prop}\label{propFSH} Let $G$ be a finitely generated group and $H$ be a subgroup with $s$-$height_G(H)=n<\infty$. Then $H$ is splitting-compatible in $G$.
\end{prop}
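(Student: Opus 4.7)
The plan is to iterate Kropholler's corner (Lemma \ref{lem:corner}) until either the splitting obstruction becomes controlled -- at which point Lemma \ref{lem:compact} yields a splitting -- or enough conjugates of $H$ from distinct cosets have accumulated to trigger the splitting height hypothesis.

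Fix an $H$-proper $H$-almost invariant subset $A$ of $G$ with $HAH=A$. I first dichotomise on $\pi_H(S_A(G,H))$: if finite, Lemma \ref{lem:compact} directly splits $G$ over a subgroup commensurable with $H$. Otherwise, I construct inductively a sequence $(L_k,B_k)$, starting from $(H,A)$, satisfying
\begin{enumerate}[(a)]
\item $L_k=\bigcap_{g\in F_k}H^g$ for a finite $F_k\subset G$ whose elements realise $m_k$ distinct $H$-cosets;
\item $B_k$ is $L_k$-proper and $L_k$-almost invariant;
\item $L_kB_kL_k=B_k$.
\end{enumerate}
At each stage I re-examine $\pi_{L_k}(S_{B_k}(G,L_k))$. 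If finite, Lemma \ref{lem:compact} applied to $(L_k,B_k)$ in place of $(H,A)$ yields a splitting of $G$ over a subgroup commensurable with $L_k\leq H$, completing the proof. Otherwise, I pick $g\in S_{B_k}(G,L_k)$ whose $H$-coset is not represented by $F_k$ and, with the signs $\epsilon_1,\epsilon_2$ dictated by the positions of $g$ and $g^{-1}$ relative to $B_k$, apply Lemma \ref{lem:corner} to set $B_{k+1}:=B_k^{\epsilon_1}\cap gB_k^{\epsilon_2}$. The latter is $L_{k+1}$-almost invariant for $L_{k+1}:=L_k\cap L_k^g=\bigcap_{g'\in F_k\cup gF_k}H^{g'}$, and the bi-invariance $L_{k+1}B_{k+1}L_{k+1}=B_{k+1}$ is inherited from that of $B_k$ using $L_{k+1}\leq L_k$ and $L_{k+1}\leq L_k^g$. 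After at most $n$ iterations $m_k>n$, whence the splitting height hypothesis declares $L_k$ splitting-compatible; applying splitting-compatibility to $(G,L_k,B_k)$ yields a splitting of $G$ over a subgroup commensurable with a subgroup of $L_k\leq H$, as required.

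The principal obstacle is verifying $L_{k+1}$-properness of $B_{k+1}$, since Lemma \ref{lem:corner} supplies only almost invariance. The $L_{k+1}$-infinitude of $B_{k+1}^*$ is immediate since $B_{k+1}^*\supseteq B_k^{-\epsilon_1}$, which is $L_k$-infinite by the $L_k$-properness of $B_k$ and hence $L_{k+1}$-infinite. The $L_{k+1}$-infinitude of $B_{k+1}$ itself is more delicate: I expect it to follow from an analysis of the dual CAT(0) cube complex $X_{B_k}$, in which the hyperplanes $(B_k,B_k^*)$ and its $g$-translate are transverse, so that combined with the right $L_k$-invariance of $B_k$ the chosen corner meets infinitely many $L_{k+1}$-cosets. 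A parallel concern is that at each stage a $g\in S_{B_k}(G,L_k)$ representing a new $H$-coset must exist; this requires tracking that $\pi_H(S_{B_k}(G,L_k))$ itself remains infinite through the iteration, which one would derive from the cube-complex geometry of $X_A$ and the way the successive splitting obstructions nest inside that of $A$.
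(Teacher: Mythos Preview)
Your overall strategy---iterate Kropholler's corner while tracking finitely many distinct $H$-cosets until the splitting height is exceeded---is exactly the paper's. But the two gaps you flag are genuine, and neither is resolved by cube-complex geometry in the way you hope.

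\textbf{Properness of the corner.} The set $B_{k+1}=B_k^{\epsilon_1}\cap gB_k^{\epsilon_2}$ can really be $L_{k+1}$-finite; there is no CAT(0) argument that rules this out. The paper's remedy is to maintain the stronger invariant $A_jH=A_j$ (right invariance under the original $H$, not merely under $L_j$), which your corners in fact inherit since $B_kH=B_k$ implies $B_{k+1}H=B_{k+1}$. When the corner $C\in\{B_{k+1},B_{k+1}^*\}$ is $K$-finite, say $C\subseteq KF$ with $F$ finite, right $H$-invariance gives $xH\subseteq KF$ for any $x\in C$, whence $[H:H\cap x^{-1}Kx]<\infty$. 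One then \emph{resets}: set $H_{j+1}=H\cap x^{-1}Kx$ (still a pointwise stabiliser of a strictly larger finite set of cosets) and $A_{j+1}=A$, the original almost-invariant set, which is automatically $H_{j+1}$-proper and $H_{j+1}$-almost invariant because $[H:H_{j+1}]<\infty$. This reset is the missing idea; without it the induction can stall.

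\textbf{Choice of $g$ with a new $H$-coset.} Knowing that $\pi_{L_k}(S_{B_k}(G,L_k))$ is infinite does not give infinitely many $H$-cosets, since $L_k$ may have infinite index in $H$. The paper sidesteps this by recording not a set $F_k\subset G$ but a finite set $X_j\subset G/H$ and taking $H_j=H_{X_j}$; then $H_j$ has finite index in $Stab(X_j)$, so the infinitude of $\pi_{H_j}(S_{A_j}(G,H_j))$ forces some $g\in S_{A_j}(G,H_j)\smallsetminus Stab(X_j)$, and $X_{j+1}=X_j\cup gX_j$ (or its conjugate in the reset case) is strictly larger. Your bookkeeping via $F_k$ and ``new $H$-coset'' can be made to work, but only after you rephrase it through $Stab(X_j)$ in this way.
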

\begin{proof} By a way of contradiction, suppose $G$ does not split nontrivially over a subgroup commensurable with a subgroup of $H$. Suppose $A$ is an $H$-proper $H$-almost invariant subset satisfying $HAH=A$.

Define $H_1=H$, $A_1=A$ and $X_1=\{H\}\subset G/H$ and assume by induction that for $j>0$, we have constructed  finite subsets  $X_i\subset G/H$  such that $|X_1|\lneq \dots \lneq |X_j|$, subgroups $H_i=H_{X_i}$ of $H$ and  $H_i$-proper $H_i$-almost invariant subsets $A_i$ satisfying $H_iA_iH=A_i$ for all  $1\leq i\leq j$.

Now, consider the splitting obstruction $S_{A_j}(G, H_j)$ of the triple $(G, H_j, A_j)$. The set $\pi_{H_j}(S_{A_j}(G, H_j))$ must be infinite, for otherwise by Lemma \ref{lem:compact}, $G$ will split nontrivially over a subgroup commensurable with $H_j$. Since $H_j$ has finite index in $Stab(X_j)$, this implies that there exists  $g\in S_{A_j}(G, H_j)$ such that $g\notin Stab(X_j)$. Let $K=H_j\cap gH_j{g^{-1}}$. By Kropholler's corner Lemma \ref{lem:corner}, one of the corners 
$$A_j\cap gA_j, \; A_j\cap gA_j^*, \; A_j^*\cap gA_j, \; A_j^*\cap gA_j^*$$
is a $K$-almost-invariant set. Denote this corner by $B$. 

First, assume that either  $B\subseteq KF$ or $B^*\subseteq KF$ where $F$ is a finite subset of  $G$.  Setting $C=B$ or $B^*$, in either case we have $CH=C\subseteq KF$. Let $x\in C$, then we have 
$$xH\subseteq KF \;\; \Rightarrow \;\; H\subseteq x^{-1}Kx E, \; |E|<\infty.$$
This implies $[H: H\cap x^{-1}Kx]<\infty.$ Denote  $X_{j+1}=\{H\}\cup x^{-1}(X_j\cup g X_j)$ and $H_{j+1}=H_{X_{j+1}}=H\cap x^{-1}Kx$. Note $|X_j|\lneq |X_{j+1}|$, because $g\notin Stab(X_j)$. Since $H_{j+1}$ has finite index in $H$, the set $A_{j+1}=A$ is a $H_{j+1}$-proper $H_{j+1}$-almost-invariant set satisfying $H_{j+1}A_{j+1}H=A_{j+1}$. 

Now, suppose neither $B$ nor $B^*$ is $K$-finite, then $B$ is a $K$-proper $K$-almost-invariant set in which case we denote $X_{j+1}=X_j\cup g X_j$, $H_{j+1}=H_{X_{j+1}}=K$ and $A_{j+1}=B$.  Then again $A_{j+1}$ is a $H_{j+1}$-proper $H_{j+1}$-almost-invariant such that $H_{j+1}A_{j+1}H=A_{j+1}$ and $|X_j|\lneq |X_{j+1}|$. This finishes the induction on $j$.

 In this way, we construct  finite  subsets $X_i\subset G/H$ with $|X_i|\lneq |X_{i+1}|$, a sequence of subgroups $H_i=H_{X_i}$ of $H$,  and an $H_i$-proper $H_i$-almost-invariant subsets $A_i$ satisfying $H_iA_iH=A_i$ for all $i>0$. By Lemma \ref{lem:sh}, we get that $H_{n+1}$ is splitting-compatible in $G$. Applying this to the triple $(G, H_{n+1}, A_{n+1})$ gives us a contradiction.
\end{proof}

\section{Applications}

In this section, we give some applications of Theorem \ref{thmA}.

 \begin{cor}\label{cor:ext}  Given a short exact sequence of groups $1\to N\to G\xrightarrow{\pi} Q$, suppose $H$ is a subgroup of $Q$ of finite height $n$. Then $\overline{H}=\pi^{-1}(H)$ is splitting-compatible in $G$.
\end{cor}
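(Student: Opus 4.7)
The plan is to deduce this corollary as an immediate consequence of two results already established in the paper: Proposition \ref{prop:ext} and Theorem \ref{thmA} (equivalently, Proposition \ref{propFSH}). There is essentially no new content to prove; the work is in assembling the right two citations.

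First, I would apply Proposition \ref{prop:ext} directly to the short exact sequence $1\to N\to G\xrightarrow{\pi} Q$ and the subgroup $H\leq Q$ of finite height $n$. That proposition asserts precisely that $\overline{H}=\pi^{-1}(H)$ satisfies $s$-$height_G(\overline{H})\leq n<\infty$. If desired, I would briefly recall the mechanism: the map $\pi$ induces a $G$-equivariant bijection $G/\overline{H}\leftrightarrow Q/H$, so for any finite $X\subseteq G/\overline{H}$ with $|X|\geq n+1$ the pointwise stabiliser $\overline{H}_X$ is a finite extension of $N$, hence commensurable with $N$, hence commensurated in $G$, hence splitting-compatible by Theorem \ref{thm:comm}.

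Second, having $\overline{H}$ of finite splitting height in the finitely generated group $G$, I would invoke Theorem \ref{thmA} (= Proposition \ref{propFSH}), which says that any subgroup of finite splitting height is splitting-compatible. Applying this to $\overline{H}$ delivers the conclusion.

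There is no genuine obstacle here; the only thing to verify is that $G$ (and not just $Q$) is finitely generated, which is part of the hypothesis, and that $\overline{H}$ is a well-defined subgroup of $G$ as the preimage of $H$, which is automatic. The substance of the corollary — turning finite height upstairs into a splitting of $G$ over something commensurable with a subgroup of $\overline{H}$ — is carried entirely by the combination of the two preceding results.
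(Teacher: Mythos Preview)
Your proposal is correct and matches the paper's proof essentially verbatim: the paper simply writes ``The result follows from Propositions \ref{prop:ext} and \ref{propFSH},'' which is exactly the two-step assembly you describe. Your parenthetical recap of the mechanism behind Proposition \ref{prop:ext} is accurate (the paper there cites Proposition \ref{prop:comm} rather than Theorem \ref{thm:comm}, but either suffices), and your remark about needing $G$ finitely generated is apt since Proposition \ref{propFSH} assumes it.
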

\begin{proof} The result follows from Propositions \ref{prop:ext} and \ref{propFSH}.\end{proof}

\begin{cor}\label{cor_all} Let $G$ be a finitely generated group. 
\begin{enumerate}[label=($\roman*$)]
\item If $H$ is a subgroup of finite height in $G$, then Conjecture \ref{conj:KR} holds.
\item If $H$ is a relatively quasiconvex subgroup of a relatively hyperbolic group $G$ with splitting-compatible parabolic subgroups, then Conjecture \ref{conj:KR} holds.
\item If $H$ is a relatively quasiconvex subgroup of a relatively hyperbolic group $G$ with virtually polycyclic peripheral subgroups, then Conjecture \ref{conj:KR} holds.
\end{enumerate}
\end{cor}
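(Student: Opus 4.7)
The entire corollary is a direct harvest of Proposition \ref{propFSH} (equivalently Theorem \ref{thmA}). The plan is to verify in each case that $H$ has finite splitting height in $G$, and then invoke Proposition \ref{propFSH} to conclude that $H$ is splitting-compatible, i.e.\ Conjecture \ref{conj:KR} holds.

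For part (i), I start from the definition: $\mathrm{height}_G(H)=n<\infty$ means that for any $k>n$ distinct cosets $g_iH$, the intersection $\bigcap_{i=1}^k H^{g_i}$ is finite. A finite subgroup is in particular virtually polycyclic, and virtually polycyclic subgroups are splitting-compatible by the theorem of Dunwoody--Roller (alternatively finite subgroups are covered by Stallings' theorem applied to $e(G)\ge e(G,H)>1$, since the splitting obtained is over a subgroup commensurable with the trivial group). Either way, every such intersection is splitting-compatible in $G$, so $s$-$\mathrm{height}_G(H)\le n<\infty$, and Proposition \ref{propFSH} finishes the job.

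For parts (ii) and (iii), I would use Example \ref{ex:rel}, which records the Hruska--Wise result \cite[Thm.~1.4]{HW1}: if $H$ is relatively quasiconvex in the relatively hyperbolic group $G$, then there is an integer $n>0$ such that for every $k>n$ and every choice of distinct cosets $g_iH$, the intersection $\bigcap_{i=1}^k H^{g_i}$ is either finite or parabolic. Under hypothesis (ii), parabolic subgroups of $G$ are splitting-compatible by assumption, and finite subgroups are splitting-compatible as in part (i). Therefore $H$ has splitting height at most $n$ in $G$, and Proposition \ref{propFSH} applies. Part (iii) is now immediate: if peripheral subgroups of $G$ are virtually polycyclic, then every parabolic subgroup is a subgroup of a virtually polycyclic group, hence itself virtually polycyclic, hence splitting-compatible by Dunwoody--Roller. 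Thus (iii) is a special case of (ii).

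There is no real obstacle once Proposition \ref{propFSH} is in hand; the only thing to double-check is that every building block appearing as a deep intersection of conjugates is known to be splitting-compatible. The mild point worth flagging is that the Dunwoody--Roller theorem on virtually polycyclic subgroups must be invoked in its original form (not just the commensurated case proved in Theorem \ref{thm:comm}), since the parabolic intersections arising in (iii) need not be commensurated in $G$. With that observation all three statements follow at once.
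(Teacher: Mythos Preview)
Your proof is correct and follows essentially the same route as the paper: in each case you verify that $H$ has finite splitting height (using finiteness of deep intersections for (i) and the Hruska--Wise result recorded in Example~\ref{ex:rel} for (ii)--(iii)), then invoke Proposition~\ref{propFSH}. Your flag that (iii) requires the full Dunwoody--Roller theorem for virtually polycyclic subgroups rather than just Theorem~\ref{thm:comm} is apt, and the minor aside ``$e(G)\ge e(G,H)$'' is not needed (for finite $H$ one has $e(G,H)=e(G)$ directly).
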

\begin{proof}If $H$ has finite height, then $H$ has finite splitting height in $G$ and we can apply Theorem \ref{thmA} to deduce part (i). 

For (ii),  by Theorem 1.4 of \cite{HW1} (cf.~Example \ref{ex:rel}), $H$ has finite splitting height in $G$.  Again applying Theorem \ref{thmA} gives (ii). 

Since virtually polycyclic groups are splitting-compatible, part (iii) is a direct application of part (ii).
\end{proof}

\begin{cor}\label{cor_T} Let $G$ be a finitely generated group. Suppose  $H$ is a finite-height codimension-1 subgroup.  If every action of $H$ on a CAT(0) cube complex has a global fixed point, in particular, if $H$ satisfies Kazhdan's property (T), then $G$ splits over a subgroup commensurable with a subgroup of $H$.
\end{cor}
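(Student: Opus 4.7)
The plan is to use the $H$-fixed vertex furnished by the hypothesis to convert the left-$H$-invariant subset given by codimension-$1$ into a bi-invariant one, and then invoke Corollary \ref{cor_all}(i), which needs only finite height.

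Since $H$ is codimension-$1$, there is an $H$-proper, $H$-almost invariant subset $A$ with $HA = A$, and Theorem \ref{thm:Sageev} supplies the dual cube complex $X_A$ on which $G$ acts essentially with $H$ finite-index in $\mathrm{Stab}(J)$ for $J = (A, A^*)$. By hypothesis, $H$ fixes a vertex $v \in X_A$. I would then define
\[
A' \;=\; \{\, g \in G \; | \; g^{-1}A \in v \,\},
\]
and verify $HA'H = A'$: left-invariance uses $h^{-1}A = A$ (so that $(hg)^{-1}A = g^{-1}A$), and right-invariance uses $h^{-1}v = v$ (so that $(gh)^{-1}A = h^{-1}(g^{-1}A) \in h^{-1}v = v$).

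The inheritance step is to compare $A'$ with $A$. Since $A = \{g \; | \; g^{-1}A \in V_1\}$ for the principal vertex $V_1 = \{B \in \Sigma_A \; | \; 1 \in B\}$, one has $A' + A = \{\, g \; | \; g^{-1}A \in v + V_1 \,\}$. As $v$ and $V_1$ are vertices of the same connected cube complex $X_A$, the symmetric difference $v + V_1$ is finite, and for each $B \in v + V_1$ the set $\{g \; | \; g^{-1}A = B\}$ is at most a single coset of $\mathrm{Stab}_G(A)$. The finite-index containments $H \leq \mathrm{Stab}_G(J)$ and $[\mathrm{Stab}_G(J) : \mathrm{Stab}_G(A)] \leq 2$ force $H \cap \mathrm{Stab}_G(A)$ to have finite index in $\mathrm{Stab}_G(A)$, so each such coset is $H$-finite. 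Hence $A' + A$ is $H$-finite, and a short symmetric-difference computation shows that $A'$ therefore inherits both $H$-properness and $H$-almost invariance from $A$.

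Finally, since $H$ has finite height, Corollary \ref{cor_all}(i) applies to $A'$ (which now satisfies the hypothesis of Conjecture \ref{conj:KR}) and yields a nontrivial splitting of $G$ over a subgroup commensurable with a subgroup of $H$. The only substantive step is identifying the right definition of $A'$ so that the $H$-fixed vertex translates into the missing right-$H$-invariance; once that is in place, the remaining verifications are routine and the splitting follows immediately from the main theorem.
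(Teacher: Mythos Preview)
Your overall strategy coincides with the paper's: manufacture a bi-$H$-invariant proper almost-invariant set from the fixed-point hypothesis, then invoke Corollary~\ref{cor_all}(i). The paper does this by noting that the hyperplane $J$ is itself a CAT(0) cube complex on which $H$ must fix a point, and then citing Lemma~2.5 of \cite{S2} as a black box; you instead work on $X_A$ directly and spell out the construction of the bi-invariant set. So your argument is in effect a self-contained proof of (a variant of) Sageev's lemma.

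There is, however, one genuine gap. The hypothesis yields only a fixed \emph{point} of $X_A$, not a fixed \emph{vertex}, and your formula $A'=\{g:g^{-1}A\in v\}$ requires $v$ to be an honest ultrafilter on $\Sigma_A$. The distinction is real: any finite group satisfies the hypothesis, yet an involution in $\mathrm{Stab}(J)$ that preserves $A$ but flips a transverse hyperplane will swap two adjacent vertices of $X_A$ while fixing only their midpoint. The repair is short. A fixed point forces $H$ to stabilise the minimal cube $C$ containing it and hence to permute the finitely many vertices of $C$. For any vertex $v$ of $C$, your set $A'_v$ is still left-$H$-invariant, and $A'_v h = A'_{h^{-1}v}$ differs from $A'_v$ by an $H$-finite set (your own symmetric-difference computation, now with $|v+h^{-1}v|\le 2\dim C$). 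Setting $A'':=\bigcup_{w\in H\cdot v}A'_w$ produces a genuinely bi-$H$-invariant set that differs from $A'_v$ by an $H$-finite amount, and therefore inherits $H$-properness and $H$-almost-invariance; Corollary~\ref{cor_all}(i) then applies as you intended.
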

\begin{proof} Let $A$ be an $H$-proper $H$-almost invariant subset of $G$ satisfying $HA=A$ and let $X_A$ be its dual cube complex. Let $J$ denote the oriented hyperplane stabilised by $H$. Since $J$ is a CAT(0) cube complex, by our assumption $H$  fixes a point on $J$. Then by Lemma 2.5 of \cite{S2}, there is an $H$-proper $H$-almost invariant subset $B$ satisfying $HBH=B$. Corollary \ref{cor_all} (i) then implies that $G$ splits over a subgroup commensurable with a subgroup of $H$.
\end{proof}

Let $G$ be a countable group with a proper, left invariant metric. A subgroup $H$ of $G$, has {\it bounded packing} in $G$ (with respect to the given metric) if for any given constant $D\geq 0$, there is a bound on the number of distinct right $H$-cosets in $G$ that are pairwise distance at most $D$ apart \cite[Definition 2.1]{HW1}. We do not need to delve into the definition of bounded packing. For our purposes, it suffices to know that the bounded packing of $H$ in $G$ is independent of the choice of proper, left invariant metric on $G$  \cite[Lemma 2.2]{HW1} and that the following properties hold.

\begin{prop}[{\cite[Proposition 2.7]{HW1}}]\label{propHW} Let $G$ be a countable group. Suppose $H$ and $K$ are subgroups with bounded packing in $G$. Then $H\cap K$ has bounded packing in $H$, $K$ and $G$.\end{prop}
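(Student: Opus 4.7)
The plan is to establish bounded packing of $H\cap K$ in $G$ by a pigeonhole argument, and then deduce the statements for $H$ and $K$ by comparing the relevant metrics. Fix a left-invariant proper metric $d_G$ on $G$ (and similarly $d_H$, $d_K$, chosen compatibly with the inclusions, which is the main flexibility needed here).

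For packing in $G$: fix $D\geq 0$ and suppose $\{(H\cap K)g_i\}_{i=1}^N$ is a collection of distinct right cosets of $H\cap K$ pairwise within $d_G$-distance $D$. The key observation is the natural projection
$$\phi\colon (H\cap K)g \;\longmapsto\; (Hg,\,Kg).$$
Since $(H\cap K)g_i\subseteq Hg_i$ and $(H\cap K)g_j\subseteq Hg_j$, and the distance between supersets is at most the distance between subsets, the family $\{Hg_i\}$ consists of $H$-cosets pairwise within $D$ (possibly with repetitions), and likewise for $\{Kg_i\}$. Moreover, $\phi$ is injective on our indexing set: if $Hg_i=Hg_j$ and $Kg_i=Kg_j$ then $g_ig_j^{-1}\in H\cap K$, so $(H\cap K)g_i=(H\cap K)g_j$, contradicting distinctness. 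By bounded packing of $H$ (respectively $K$) in $G$, the number of distinct $H$-cosets (resp.\ $K$-cosets) appearing in the image of $\phi$ is at most $N_H(D)$ (resp.\ $N_K(D)$). Therefore $N\leq N_H(D)\cdot N_K(D)$, giving a bound depending only on $D$.

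For packing in $H$: arrange that the inclusion $(H,d_H)\hookrightarrow (G,d_G)$ is $1$-Lipschitz, so that $d_G|_H\leq d_H$; this is immediate from Cayley graph metrics when $H$ is finitely generated with generators lying in a generating set of $G$, and in the countable case one chooses proper left-invariant metrics on $G$ whose restriction dominates the relevant structure on $H$. A collection of distinct right $(H\cap K)$-cosets in $H$ pairwise within $d_H$-distance $D$ then gives a collection of distinct right $(H\cap K)$-cosets in $G$ pairwise within $d_G$-distance $D$, so the previous step provides the bound. The argument for $K$ in place of $H$ is symmetric.

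The proof is essentially a combinatorial injection-and-pigeonhole argument, and I do not expect any step to pose a real obstacle. The only point requiring care is the bookkeeping of metrics in the countable (non-finitely-generated) setting so that bounded packing transfers cleanly between $H$, $K$, and $G$; once the inclusions are arranged to be Lipschitz with respect to proper left-invariant metrics, the injectivity of $\phi$ does all the work.
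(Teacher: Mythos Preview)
Your argument is correct and is essentially the standard proof of this fact, but note that the paper does not supply its own proof of this proposition: it is quoted verbatim from Hruska--Wise \cite[Proposition~2.7]{HW1} and used as a black box. So there is no ``paper's proof'' to compare against here beyond the original reference.

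That said, your proof matches the approach in \cite{HW1}. Two minor comments on presentation. First, in the countable setting the cleanest way to handle the ``packing in $H$'' step is simply to restrict the chosen proper left-invariant metric $d_G$ to $H$: the restriction is again proper and left-invariant, and since bounded packing is independent of the choice of such metric (a fact proved in \cite{HW1}), you may as well take $d_H = d_G|_H$. Then a family of $(H\cap K)$-cosets in $H$ that is pairwise $D$-close for $d_H$ is literally the same family viewed in $G$, pairwise $D$-close for $d_G$, and the first step applies. This avoids the slightly awkward discussion of arranging a Lipschitz inclusion. Second, your injectivity-and-pigeonhole argument for packing in $G$ is exactly the intended one and needs no change.
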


\begin{prop}[\cite{Yang}]\label{propY} Let $G$ be a virtually polycyclic group. Then any subgroup $H$ has bounded packing in $G$.\end{prop}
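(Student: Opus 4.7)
The plan is to prove the proposition by induction on the Hirsch length $h(G)$, after reducing to the case where $G$ itself is polycyclic. Bounded packing is inherited in both directions under passage to subgroups of finite index (the Cayley graphs are quasi-isometric, and cosets of $H$ in $G$ correspond in bounded-to-one fashion to cosets of $H \cap G_0$ in a finite-index polycyclic subgroup $G_0$), so this initial reduction causes no difficulty.

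For the base case $h(G) = 0$, the group $G$ is finite and bounded packing is immediate. For the inductive step, choose a normal subgroup $N \triangleleft G$ with $G/N \cong \Z$; then $N$ is polycyclic of Hirsch length $h(G) - 1$. Fix $D \geq 0$ and suppose $g_1 H, \dots, g_k H$ are distinct cosets of $H$ pairwise within word-distance at most $D$ in a fixed word metric on $G$. Projecting to $G/N$, the images $\overline{g_i}(HN/N)$ remain pairwise within distance $D$, so the inductive hypothesis applied to the polycyclic quotient $G/N$ and its subgroup $HN/N$ gives a bound $M_1 = M_1(D)$ on the number of distinct projected cosets.

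For each projection class, I would then bound the number of $g_i H$ mapping to it. Two such cosets $g_i H$ and $g_j H$ satisfy $g_i^{-1}g_j \in HN$; writing $g_j = g_i n h$ with $n \in N$ and $h \in H$, the cosets in this class have the form $g_i n H$ for $n$ running over coset representatives of $H \cap N$ in $N$. Left-translating by $g_i^{-1}$ and invoking the inductive hypothesis for $H \cap N$ as a subgroup of $N$, together with Proposition \ref{propHW} to combine the packing of $H \cap N$ in $N$ with that of $H$ in $G$, yields a uniform bound $M_2 = M_2(D)$ on the size of each projection class. The total bound $M_1(D) \cdot M_2(D)$ completes the induction.

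The main obstacle will be the vertical counting step: one must relate the restriction of the word metric on $G$ to $N$ with an intrinsic word metric on $N$, so that the pairwise distance bound $D$ in $G$ translates into a uniform distance bound inside $N$. In a polycyclic group the inclusion $N \hookrightarrow G$ need not be a quasi-isometric embedding, but the distortion can be controlled inductively using the polycyclic series. Alternatively, one can bypass the metric issue altogether via Yang's geometric approach, which realises $G$ as a cocompact lattice in a simply connected solvable Lie group by Mostow's embedding theorem; bounded packing for closed subgroups of such lattices then follows directly from homogeneity and properness of the action.
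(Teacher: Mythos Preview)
The paper does not supply a proof here; it simply quotes the result from \cite{Yang}. Yang's argument is \emph{not} an induction on Hirsch length and does not use Mostow's embedding theorem. The main theorem of \cite{Yang} is that every \emph{separable} subgroup of a finitely generated group has bounded packing; the polycyclic case then follows immediately from Mal'cev's theorem that virtually polycyclic groups are subgroup separable. The separability argument is short and entirely metric-free: given $D$, one uses separability to choose a finite-index subgroup $K\supseteq H$ with $K\cap B(e,D)=H\cap B(e,D)$, and checks that distinct $H$-cosets pairwise within distance $D$ map to distinct $K$-cosets, of which there are only $[G:K]$.

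Your inductive scheme, by contrast, has a genuine gap at precisely the point you identify, and neither of your proposed fixes closes it. The appeal to Proposition~\ref{propHW} is circular: that proposition takes as \emph{hypothesis} that $H$ has bounded packing in $G$---which is what you are trying to prove---in order to conclude that $H\cap N$ has bounded packing in $N$; it does not go in the direction you need. The inductive hypothesis does give bounded packing of $H\cap N$ in $N$ with respect to a word metric \emph{on $N$}, but the pairwise bound you possess is in the word metric on $G$, and in polycyclic groups such as $\Z^2\rtimes_\varphi\Z$ with $\varphi$ hyperbolic the subgroup $N$ is exponentially distorted, so there is no uniform translation between the two metrics. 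There is a second, independent difficulty you do not mention: from $d_G(n_\alpha H,n_\beta H)\le D$ you only get witnesses $h,h'\in H$, not in $H\cap N$, so you cannot even place the comparison inside $N$ to begin with. Your alternative (b) is a mis-attribution of Yang's method; if you want a clean argument, the separability route above is both correct and far shorter than any repair of the induction.
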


 We will also need the following results of Sageev \cite{S2} (see also {\cite[Corollary 3.1]{HW1}}) and Hruska-Wise \cite{HW1}.

\begin{prop}[\cite{S2}]\label{prop:pack} Let $G$ be a finitely generated group and suppose $H$ is a finitely generated codimension-$1$ subgroup with bounded packing in $G$. Then there exists an $H$-proper $H$-almost invariant subset $A$ with $HA=A$ so that the dual cube complex $X_A$ is finite dimensional.
\end{prop}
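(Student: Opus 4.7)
The plan is to use the finite generation of $H$ to select $A$ with a geometrically controlled coboundary, and then exploit bounded packing to bound the width of $\Sigma_A$. By Theorem \ref{thm:Sageev}(iv), a finite width bound is exactly what is needed to conclude that $X_A$ is finite-dimensional.

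First I would fix a finite symmetric generating set $S$ of $G$ containing a generating set of $H$, and, using $e(G,H)>1$, choose an $H$-proper $H$-almost invariant subset $A$ with $HA=A$. The finite generation of $H$ then lets us refine $A$ (passing to a suitable $H$-almost invariant thickening if necessary, as in Sageev's construction) so that the coboundary $\delta A$ of $A$ in the Cayley graph of $(G,S)$ is contained in a bounded neighborhood of $H$; equivalently, there is a finite subset $F\subset G$ with $\delta A\subseteq HF$. The mechanism is that $H$-almost invariance of $A$ forces the edges of $\delta A$ to fall into finitely many $H$-orbits, and finite generation of $H$ together with $HA=A$ permits one to choose a representative cut whose edges sit in a uniformly bounded neighborhood of $H$.

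Next I would show that if $g\in G$ is such that $gA$ is not nested with $A$, then $gH$ is within a uniform distance $D$ of $H$ in the Cayley graph, where $D$ depends only on $F$ and $S$. Non-nestedness means all four corners $A\cap gA$, $A\cap gA^{*}$, $A^{*}\cap gA$ and $A^{*}\cap gA^{*}$ are nonempty; picking a vertex in each and joining them by edge paths in the Cayley graph, any such path between $A$ and $A^{*}$ must cross $\delta A$, and between $gA$ and $gA^{*}$ must cross $g\delta A$. A short combinatorial argument then produces an edge of $\delta A$ and an edge of $g\delta A$ within bounded distance of each other, and since $\delta A\subseteq HF$ this forces $d(H,gH)\le D$.

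Finally, given any family of pairwise non-nested translates $g_{1}A,\ldots,g_{k}A$, applying the previous step to each pair $g_{j}^{-1}g_{i}A$ versus $A$ shows that the cosets $g_{1}H,\ldots,g_{k}H$ are pairwise at distance at most $D$ in the Cayley graph. The bounded packing hypothesis then gives $k\le N(D)$ for a constant $N(D)$ independent of the family, so the width of $\Sigma_A$ is finite. By Theorem \ref{thm:Sageev}(iv), $X_A$ is finite-dimensional. The main obstacle is the second step: choosing $A$ so that its coboundary is genuinely controlled by a finite neighborhood of $H$, and then converting the abstract corner data coming from non-nestedness into a concrete bound on coset distance in the Cayley graph. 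Once that geometric bridge is established, the remainder is a direct appeal to bounded packing and Theorem \ref{thm:Sageev}(iv).
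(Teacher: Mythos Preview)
The paper does not supply its own proof of this proposition: it is quoted as a result of Sageev \cite{S2} (with a pointer to \cite[Corollary~3.1]{HW1}), and is used as a black box in the proofs of Corollaries~\ref{cor:rel} and~\ref{cor:str}. So there is no in-paper argument to compare against; what one can compare to is the argument in the cited sources, and your outline is exactly that argument: control $\delta A$ by a neighbourhood of $H$, turn non-nestedness of $gA$ and $A$ into a uniform bound on $d(H,gH)$, invoke bounded packing to bound the width of $\Sigma_A$, and finish with Theorem~\ref{thm:Sageev}(iv).

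One point of precision is worth flagging. The containment $\delta A\subseteq HF$ for some finite $F$ already follows from $H$-almost invariance of $A$ and $HA=A$; it does not use finite generation of $H$. Where finite generation is genuinely needed is in your second step: the ``short combinatorial argument'' converting non-nestedness into a bound on $d(H,gH)$ relies on the fact that $H$ (and hence any bounded neighbourhood of $H$) is a connected subgraph of the Cayley graph once $S$ contains a generating set for $H$. Without this coarse connectedness, the crossings of $\delta A$ and $g\delta A$ produced by your path argument need not be close to one another, and the bound on $d(H,gH)$ does not follow. In the cited proofs this is handled by choosing $A$ so that $A$ and $A^{*}$ are unions of components of the complement of a connected $H$-cocompact set (equivalently, working with a ``deep'' representative of the almost-invariant class), which is precisely what makes the crossing argument go through. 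Your write-up gestures at this with ``passing to a suitable $H$-almost invariant thickening if necessary'', but you should be explicit that the role of finite generation of $H$ is to supply this connectedness, not merely the inclusion $\delta A\subseteq HF$.
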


\begin{prop}[{\cite[Theorem 1.1]{HW1}}]\label{propHW} Let $H$ be a relatively quasiconvex subgroup of a relatively hyperbolic group $G$. Suppose $H\cap gPg^{-1}$ has bounded packing in $gPg^{-1}$ for each conjugate of each peripheral subgroup $P$. Then $H$ has bounded packing in $G$.
\end{prop}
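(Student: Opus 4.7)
The plan is to work in the Groves--Manning cusped space $X$ associated to the relatively hyperbolic pair $(G, \{P_\lambda\})$, obtained from the Cayley graph of $G$ by equivariantly attaching a combinatorial horoball to every coset of every peripheral subgroup. This space is $\delta$-hyperbolic, and $G$ acts properly on $X$ with cobounded action on the thick part. A central feature is that relative quasiconvexity of $H$ in $G$ translates to honest quasiconvexity of an orbit $Hx_0$ in $X$, for a basepoint $x_0$ in the thick part. Fix $D \geq 0$ and suppose $\{g_1 H, \dots, g_n H\}$ are $n$ distinct cosets pairwise at distance at most $D$ in $G$; the goal is to bound $n$ in terms of $D$.

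The first step is to transfer the configuration to the cusped space: the orbits $g_i H x_0$ are uniformly quasiconvex and pairwise within some distance $D' = D'(D,\delta)$ in $X$. In a hyperbolic space, a collection of pairwise $D'$-close quasiconvex subsets admits a \emph{coarse centre}, namely a ball of radius $R = R(D',\delta)$ meeting each of them. Two cases then arise depending on whether this centre lies in the thick part of $X$ or at large depth inside a horoball.

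If the coarse centre lies in the thick part, the bound on $n$ reduces to bounded packing for quasiconvex subgroups of a hyperbolic space under a geometric group action, which is classical. If instead the centre sits at depth much larger than $\delta$ inside the horoball based on a peripheral coset $gP^h$, then the exponential distortion built into combinatorial horoballs forces each $g_i H$ that penetrates sufficiently deep to trace out, upon projection to the base of the horoball and conjugation by $g^{-1}$, a coset of $H \cap P^h$ in $P^h$, and these projected cosets are pairwise close in the word metric of $P^h$ with constant $D'' = D''(D,\delta)$. By hypothesis $H \cap P^h$ has bounded packing in $P^h$, so the number of such projections is bounded by some $N(D'')$. Summing the thick-part bound with the horoball bound, and noting that only boundedly many horoballs can meet a fixed ball in $X$, produces the required uniform bound on $n$. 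The main obstacle is the second case: making rigorous the dichotomy between ``coarse centre in the thick part'' and ``coarse centre deep in a single horoball'', and extracting from the latter an honest uniformly close collection of cosets of $H \cap P^h$ inside $P^h$ to which the hypothesis can be applied. This is where the detailed Groves--Manning horoball geometry and quasiconvex projection estimates are essential.
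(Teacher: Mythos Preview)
The paper does not prove this proposition at all: it is quoted verbatim as \cite[Theorem 1.1]{HW1} and invoked as a black box in the proof of Corollary~\ref{cor:rel}. So there is no ``paper's own proof'' to compare against; you are sketching a proof of the cited Hruska--Wise theorem itself.

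Your outline is broadly in the spirit of the actual Hruska--Wise argument, which also works in the cusped space and separates the analysis into a thick-part count and a peripheral count. Two points deserve care. First, the existence of a \emph{coarse centre} --- a ball of radius $R=R(D',\delta)$ meeting all of the $g_iHx_0$, with $R$ independent of $n$ --- is exactly the kind of Helly-type statement that must be proved rather than asserted; if $R$ were allowed to depend on $n$ the argument would be circular. The Hruska--Wise paper avoids this by arguing more locally (controlling how many cosets can pass near a fixed geodesic or a fixed transition point) rather than positing a single global centre. Second, in the horoball case you need to justify that distinct cosets $g_iH$ that penetrate the horoball over $gP$ yield \emph{distinct} cosets of $H\cap P^g$ upon projection; this uses the relative quasiconvexity of $H$ and the specific geometry of combinatorial horoballs, and is where most of the work in \cite{HW1} actually lies. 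Your closing sentence correctly flags this as the main obstacle, but the sketch as written does not indicate how it is overcome.
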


We are now ready to prove the last two applications. 

\begin{cor}\label{cor:rel} Let $G$ be a relatively hyperbolic group with virtually polycyclic peripheral subgroups.  Suppose  $H$ is a codimension-1,   relatively quasiconvex subgroup of $G$. Then either $G$ splits nontrivially over a subgroup commensurable with a subgroup of $H$ or there exist  
$$H=H_1> H_2>\dots >H_{k+1},$$
where $g_i\in G$, $H_{i+1}=H_i\cap g_iH_ig_i^{-1}$ is a codimension-$1$, relatively quasiconvex subgroup of $H_{i}$, and $H_{k}$ splits nontrivially over a subgroup commensurable with a subgroup of $H_{k+1}$.\end{cor}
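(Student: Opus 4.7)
\emph{Plan.} I plan to iterate a descent on dual cube complexes: at each level the ambient group either splits (terminating the chain with the desired splitting) or we pass to the stabiliser of a sub-hyperplane to produce the next link of the chain. Bounded packing ensures the initial dual cube complex is finite-dimensional, and the finite splitting height of Example~\ref{ex:rel} (equivalently, the finite dimensionality of $X_{A_1}$) guarantees termination.

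\emph{Setup.} Every peripheral subgroup of $G$ is virtually polycyclic, so every intersection $H\cap P^g$ is a subgroup of $P^g$ and has bounded packing in $P^g$ by Proposition~\ref{propY}. Then Proposition~\ref{propHW} yields that $H$ has bounded packing in $G$, and Proposition~\ref{prop:pack} supplies an $H$-proper $H$-almost invariant subset $A_1\subseteq G$ with $HA_1=A_1$ whose dual cube complex $X_{A_1}$ is finite-dimensional; write $J_1=(A_1,A_1^*)$ for the hyperplane stabilised up to finite index by $H_1=H$. Crucially, the same hypotheses persist along the chain: each relatively quasiconvex subgroup $H_i$ of $G$ produced below is itself relatively hyperbolic with induced peripheral subgroups of the form $H_i\cap P^g$, which remain virtually polycyclic, so Corollary~\ref{cor_all}(iii) is applicable to the pair $(H_{i-1}, H_i)$ at each inductive level.

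\emph{Iterative step.} Inductively suppose $H_1 > \dots > H_i$ have been constructed with $H_{j+1}=H_j\cap g_j H_j g_j^{-1}$ a codimension-1 relatively quasiconvex subgroup of $H_j$, and that $H_i$ acts on a CAT(0) cube complex $J_i$ realised as a sub-hyperplane of $X_{A_1}$. If $H_i$ fixes a point on $J_i$, then by Sageev's Lemma~2.5 of~\cite{S2} there is an $H_i$-proper $H_i$-almost invariant subset $B$ of $H_{i-1}$ (where $H_0:=G$) with $H_iBH_i=B$; Corollary~\ref{cor_all}(iii) applied to $(H_{i-1}, H_i)$ yields a nontrivial splitting of $H_{i-1}$ over a subgroup commensurable with a subgroup of $H_i$ — the first alternative when $i=1$, or the second alternative with $k=i-1$ and $H_{k+1}=H_i$ otherwise. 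If, on the other hand, $H_i$ acts on $J_i$ without fixed point, pick $g_i\in G\setminus \mathrm{Comm}_G(H_i)$ with $g_iJ_i$ transverse to $J_i$, set $H_{i+1}:=H_i\cap g_iH_ig_i^{-1}$, and apply Kropholler's corner (Lemma~\ref{lem:corner}) to produce an $H_{i+1}$-proper $H_{i+1}$-almost invariant corner $A_{i+1}$ (the $H_{i+1}$-finite subcase being handled as in the first case of the proof of Proposition~\ref{propFSH}, which itself triggers a splitting). Intersections of relatively quasiconvex subgroups being relatively quasiconvex, $H_{i+1}$ is relatively quasiconvex in $G$ and hence in $H_i$, while the sub-hyperplane $\mathcal J=g_iJ_i\cap J_i$ of $J_i$ witnesses the codimension-1 property of $H_{i+1}$ in $H_i$.

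\emph{Main obstacle.} The hardest step is passing from the global picture — the corner $A_{i+1}$ lives in $G$ and is $H_{i+1}$-almost invariant as a subset of $G$ — to the local one, where $H_{i+1}$ must be codimension-1 \emph{inside $H_i$}. Geometrically this amounts to the sub-hyperplane $\mathcal J\subseteq J_i$ being stabilised up to finite index by $H_{i+1}$ and defining a pair of half-spaces on which $H_i$ acts essentially; algebraically it requires constructing, from $A_{i+1}$, an $H_{i+1}$-proper $H_{i+1}$-almost invariant subset of $H_i$ (or of an auxiliary cube complex on which $H_i$ acts essentially). A secondary subtlety is the maintenance along the descent of the ``relatively hyperbolic with virtually polycyclic peripherals'' structure at each $H_i$, so that Corollary~\ref{cor_all}(iii) can be re-applied uniformly at every level.
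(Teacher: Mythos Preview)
Your overall architecture---iterate on sub-hyperplanes, at each stage either split via Corollary~\ref{cor_all}(iii) or drop dimension---matches the paper's. The gap is in how you manufacture the next almost-invariant set. You invoke Kropholler's corner (Lemma~\ref{lem:corner}) to produce $A_{i+1}$, but that corner is a subset of $G$ (or of whatever ambient group you applied the lemma in), not of $H_i$; you then correctly identify the resulting ``global-to-local'' translation as the main obstacle, and leave it unresolved. This obstacle is self-inflicted: the corner argument is the engine of Proposition~\ref{propFSH}, where the ambient group stays fixed, but it is the wrong tool here.

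The paper bypasses the obstacle entirely. Once $H_i$ acts on the CAT(0) cube complex $J_i$ without a global fixed point, that action is essential (Remark~\ref{rmk:fix}). Pick a hyperplane of $J_i$; it has the form $J_i\cap g_iJ_i$ for some $g_i\in G$, with stabiliser in $H_i$ equal to $H_{i+1}=H_i\cap H_i^{g_i}$. Then define
\[
A_{i+1}\;=\;\{\,h\in H_i\;:\;hv\in J_i^{+}\,\}
\]
for a fixed vertex $v\in J_i$, where $J_i^{+}$ is the relevant half-space. By the very definition of an essential action this is an $H_{i+1}$-proper $H_{i+1}$-almost invariant subset \emph{of $H_i$}, so $H_{i+1}$ is codimension-1 in $H_i$ with no extra work. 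Moreover $\dim J_{i+1}\le \dim J_i-1$, so the width of $\Sigma_{A_{i+1}}$ drops at each step; this is what forces termination, not the finite splitting height of $H$ in $G$ (that bound controls intersections of conjugates of $H$ in $G$, which is a different quantity from the length of your chain). Once you make this replacement your secondary subtlety---persistence of the relatively-hyperbolic-with-virtually-polycyclic-peripherals structure---is handled exactly as you indicate, via \cite[Theorem~1.2, Corollary~9.3]{Hru}, and the proof goes through.
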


\begin{proof}   By Proposition \ref{propY} and Proposition \ref{propHW}, we know that $H$ has bounded packing in $G$. Let $A$ be an $H$-proper $H$-almost invariant subset of $G$ satisfying $HA=A$ and let $X_A$ be its dual cube complex. By Proposition \ref{prop:pack}, we can take $X_A$ to be finite dimensional, say of dimension $n$. Then the width of $\Sigma_A$ is  $n$. Let $J$ denote the oriented hyperplane stabilised by $H$. If $H$  fixes a point on $J$,  then Lemma 2.5 of \cite{S2} implies that there exists an $H$-proper $H$-almost invariant subset $B$ satisfying $HBH=B$. Since by Corollary \ref{cor_all}, $H$ is splitting-compatible, this shows that $G$ splits over a subgroup commensurable with a subgroup of $H$.

 If $H$ does not fix a point on $J$, it contains a codimension-$1$ subgroup $H_2=H\cap g_1H{g_1^{-1}}$ for some $g_1\in G$.  Define  $A_2=\{h\in H \; | \;  hv\in J^{+}\}$ for some fixed vertex $v\in J$. Then $A_2$ is an $H_2$-proper $H_2$-almost invariant subset of $H$. Since $\dim (J)=n-1$, the width of $\Sigma_{A_2}$ is at most $n-1$. The dual cube complex  $X_{A_2}$ constructed for $(H, H_2, A_2)$ is then finite dimensional of dimension at most $n-1$. Since any virtually polycyclic group is slender, by Corollary 9.2 of \cite{Hru}, $H$ is finitely generated. Using the stability of relative quasiconvexity under finite intersections and passing to subgroups \cite[Theorem 1.2, Corollary 9.3]{Hru}, we have that $H_2$ is a relatively quasiconvex subgroup of $H$. We can then repeat the above argument replacing $(G, H, A, \Sigma_A, X_A)$ with  $(H, H_2, A_2, \Sigma_{A_2}, X_{A_2})$.  After finite iterations, say $k$ of them, either $H_{k}$ will split over a subgroup commensurable with a subgroup of $H_{k+1}$ or $\Sigma_{A_{k+1}}$ will have width $1$. In which case, $X_{A_{k+1}}$ will be a tree and thus $H_{k}$ will split nontrivially over a subgroup commensurable with  $H_{k+1}$.
\end{proof}

\begin{remark} By Theorem 1.1 of \cite{HW2}, it follows that if $H$ is a codimension-1, relatively quasiconvex subgroup of a relatively hyperbolic group $G$, then $G$ acts relatively cocompactly on the associated dual cube complex $X$. This result can be applied to Corollary \ref{cor:rel} to insure in addition that for each $i\geq 0$, the group $H_i$ acts relatively cocompactly on the associated dual cube complex. 
\end{remark}

\begin{cor}\label{cor:str}  Let $G$ be a finitely generated group. Suppose  $H$ is a codimension-1,   strongly quasiconvex subgroup of $G$. Then either $G$ splits nontrivially over a subgroup commensurable with a subgroup of $H$ or there exist  
$$H=H_1> H_2>\dots >H_{k+1},$$
where $g_i\in G$, $H_{i+1}=H_i\cap g_iH_ig_i^{-1}$  is a codimension-$1$, strongly quasiconvex subgroup of $H_{i}$, and $H_{k}$ splits nontrivially over a subgroup commensurable with a subgroup of $H_{k+1}$.\end{cor}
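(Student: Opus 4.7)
The plan is to follow the proof of Corollary \ref{cor:rel} almost verbatim, replacing relative quasiconvexity in a relatively hyperbolic group with virtually polycyclic peripherals by strong quasiconvexity in a finitely generated group. The four structural properties that drove that argument --- finite generation, bounded packing, splitting-compatibility, and closure under taking intersections with conjugates --- all hold for strongly quasiconvex subgroups by \cite{Tran} and \cite{AMST}; in particular, splitting-compatibility of $H$ follows from the finite-height property of strongly quasiconvex subgroups together with Corollary \ref{cor_all}(i).

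With these ingredients in hand, I would proceed as in Corollary \ref{cor:rel}. Bounded packing of $H$ in $G$ combined with Proposition \ref{prop:pack} yields an $H$-proper $H$-almost invariant subset $A\subset G$ with $HA=A$ whose dual cube complex $X_A$ has finite dimension $n$, so $\Sigma_A$ has width $n$. Let $J=(A,A^*)$ denote the hyperplane stabilised (up to finite index) by $H$. If $H$ fixes a vertex of $J$, then Lemma 2.5 of \cite{S2} upgrades $A$ to an $H$-proper $H$-almost invariant subset $B$ with $HBH=B$, and Corollary \ref{cor_all}(i) delivers a nontrivial splitting of $G$ over a subgroup commensurable with a subgroup of $H$, completing the first alternative.

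Otherwise, the action of $H$ on the CAT(0) cube complex $J$ has no global fixed point, so by Remark \ref{rmk:fix} it acts essentially on $J$, producing $g_1\in G$ such that $H_2=H\cap H^{g_1}$ is a codimension-$1$ subgroup of $H$. Closure of the strongly quasiconvex class under finite intersections \cite{Tran} ensures $H_2$ is strongly quasiconvex in $H$. Picking a vertex $v\in J$, the set $A_2=\{h\in H \mid hv\in J^+\}$ is $H_2$-proper and $H_2$-almost invariant in $H$, and the associated dual cube complex $X_{A_2}$ has dimension at most $\dim J=n-1$. I would then iterate the same dichotomy on the triple $(H,H_2,A_2)$ in place of $(G,H,A)$; the dimension drops by at least one at each stage, so the procedure terminates after at most $n-1$ steps. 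Termination occurs either because some intermediate $H_k$ splits over a subgroup commensurable with a subgroup of $H_{k+1}$ via the fixed-point alternative, or because $\Sigma_{A_{k+1}}$ reaches width $1$, in which case Theorem \ref{thm:Sageev}(v) makes $X_{A_{k+1}}$ a tree on which $H_k$ acts without a global fixed point and Bass--Serre theory gives the required splitting of $H_k$ over a subgroup commensurable with $H_{k+1}$.

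The main obstacle is essentially bookkeeping: one must verify at each stage that the successive intersections $H_i$ remain finitely generated, strongly quasiconvex in their predecessors, and of bounded packing, so that Proposition \ref{prop:pack} and Corollary \ref{cor_all}(i) can be reapplied. As in the relatively hyperbolic setting of Corollary \ref{cor:rel}, this persistence under taking intersections with conjugates is the key structural input (here from \cite{Tran} and \cite{AMST} rather than from Hruska's work on relatively quasiconvex subgroups), and once it is in place the induction is formally identical to the one already carried out.
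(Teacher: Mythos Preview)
Your proposal is correct and follows exactly the approach of the paper, which simply records that Theorem~1.2 of \cite{Tran} supplies finite generation, bounded packing, and closure of strong quasiconvexity under intersections, and then declares the argument to be completely analogous to Corollary~\ref{cor:rel}. Your write-up in fact spells out more of the details than the paper does; the only minor over-statement is in your final paragraph, since bounded packing is only needed at the initial stage to invoke Proposition~\ref{prop:pack}, while finite-dimensionality at later stages is inherited automatically from the dimension drop $\dim X_{A_{i+1}}\le \dim J_i = \dim X_{A_i}-1$.
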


\begin{proof} By Theorem 1.2 of \cite{Tran}, strongly quasiconvex subgroups are finitely generated, have bounded packing and the intersection of any two strongly quasiconvex subgroups $H_1$ and $H_2$ is again strongly quasiconvex in $H_1$, $H_2$ and $G$. The proof now is completely analogous to the proof of Corollary \ref{cor:rel}.
\end{proof}
\noindent Corollary \ref{cor:str} can be viewed as an extension of \cite[Corollary 1.3]{AMST} and \cite[Corollary 1.3]{Tran} where we use finite dimensionality of the dual cube complex to obtain the sequence of subgroups.

\end{document}